\numberwithin{equation}{section}
\newtheorem{Theorem}{Theorem}[section]
 { \theoremstyle{definition}
\newtheorem{Example}[Theorem]{Example}
\newtheorem{Remark}[Theorem]{Remark} }
\begin{document}
\allowdisplaybreaks

\newcommand{\arXivNumber}{2006.12217}

\renewcommand{\PaperNumber}{117}

\FirstPageHeading

\ShortArticleName{A Gneiting-Like Method for Constructing Positive Definite Functions on Metric Spaces}

\ArticleName{A Gneiting-Like Method for Constructing Positive\\ Definite Functions on Metric Spaces}

\Author{Victor S.~BARBOSA~$^\dag$ and Valdir A. MENEGATTO~$^\ddag$}

\AuthorNameForHeading{V.S.~Barbosa and V.A.~Menegatto}

\Address{$^\dag$~Centro Tecnol\'{o}gico de Joinville-UFSC,\\
\hphantom{$^\dag$}~Rua Dona Francisca, 8300. Bloco U, 89219-600 Joinville SC, Brazil}
\EmailD{\href{victorrsb@gmail.com}{victorrsb@gmail.com}}

\Address{$^\ddag$~Instituto de Ci\^encias Matem\'aticas e de Computa\c{c}\~ao, Universidade de S\~ao Paulo,\\
\hphantom{$^\dag$}~Caixa Postal 668, 13560-970, S\~ao Carlos - SP, Brazil}
\EmailD{\href{menegatt@gmail.com}{menegatt@gmail.com}}

\ArticleDates{Received June 23, 2020, in final form November 07, 2020; Published online November 19, 2020}

\Abstract{This paper is concerned with the construction of positive definite functions on a~cartesian product of quasi-metric spaces using generalized Stieltjes and complete Bernstein functions. The results we prove are aligned with a well-established method of T.~Gneiting to construct space-time positive definite functions and its many extensions. Necessary and sufficient conditions for the strict positive definiteness of the models are provided when the spaces are metric.}

\Keywords{positive definite functions; generalized Stieltjes functions; Bernstein functions; Gneiting's model; products of metric spaces}

\Classification{42A82; 43A35}

\section{Introduction}

Let $(X,\rho)$ be a {\it quasi-metric space}, that is, a nonempty set $X$ endowed with a function $\rho\colon X\times X\allowbreak \to [0,\infty)$ (its {\it quasi-distance}) satisfying $\rho(x,x')=\rho(x',x)$ and $\rho(x,x)=0$, $x,x'\in X$.
Continuity on $(X,\rho)$ can be defined as on a metric space.
Write $D_X^\rho$ to indicate the {\it diameter-set} of $(X, \rho)$, i.e.,
\begin{gather*}
D_X^\rho=\{\rho(x,x')\colon x,x'\in X\}.
\end{gather*}
This paper is mainly concerned with {\it radial positive definite functions} on $(X,\rho)$, that is, conti\-nuous functions $f\colon D_X^\rho \to\mathbb{\mathbb{R}}$ satisfying
\begin{gather}\label{mainPD}
\sum_{j,k=1}^n c_j c_k f(\rho(x_j, x_k))\geq 0,
\end{gather}
for $n\geq 1$, reals scalars $c_1, \ldots, c_n$, and points $x_1, \ldots, x_n$ in $X$. Functions of this type play an~important role in classical analysis, approximation theory, probability theory, and statistics.
Reference \cite{wells} covers what we will need in this paper about radial positive definite functions.
The {\it strict positive definiteness} of a radial positive definite function $f$ as above demands that the inequalities be strict when the $x_j$ are distinct and the $c_j$ are not all zero. We will write $f\in {\rm PD}(X,\rho)$ and $f\in {\rm SPD}(X,\rho)$ to indicate that $f$ is positive definite and strictly positive definite on $(X,\rho)$, respectively.

The two concepts just introduced extend to a product of finitely many quasi-metric spaces. However, we will formalize the extension only in the setting to be covered in this paper. Unless stated otherwise, throughout the paper, $(X,\rho)$, $(Y,\sigma)$ and $(Z,\tau)$ will denote three quasi-metric spaces while $X\times Y \times Z$ will denote their cartesian product. Here, we will not distinguish among the spaces $X\times Y \times Z$, $X \times(Y \times Z)$ and $(X\times Y)\times Z$ and will not detach any special quasi-distance in them. A continuous function $f\colon D_X^\rho\times D_Y^\sigma\times D_Z^\tau \to \mathbb{R}$ is said to be {\it positive definite} on $X\times Y \times Z$ (the term radial will be abandoned), and we write $f\in {\rm PD}(X\times Y\times Z, \rho,\sigma,\tau)$, if
\begin{gather*}
\sum_{j,k=1}^n c_j c_k f(\rho(x_j, x_k),\sigma(y_j,y_k), \tau(z_j,z_k))\geq 0,
\end{gather*}
for $n\geq 1$, reals scalars $c_1$, \ldots, $c_n$, and points $(x_1,y_1,z_1), \ldots, (x_n,y_n,z_n)$ in $X\times Y\times Z$. A~function~$f$ in ${\rm PD}(X\times Y\times Z, \rho,\sigma,\tau)$ is {\it strictly positive definite}
if the inequalities above are strict when the $(x_j,y_j,z_j)$ are distinct and the $c_j$ are not all zero. Here we write ${\rm SPD}(X\times Y\times Z, \rho,\sigma,\tau)$.

Two problems involving the concepts of positive definiteness and strict positive definiteness are very common in the literature: to characterize ${\rm PD}(X,\rho)$, ${\rm SPD}(X,\rho)$, ${\rm PD}(X\times Y, \rho,\sigma)$, etc, for fixed choice of the spaces and to determine, explicitly, large families of functions belonging to them that have some importance in applications.

I.J.~Schoenberg characterized in \cite{schoen1} the class ${\rm PD}(\mathbb{R}^n,\rho)$ where $\rho$ is the usual Euclidean distance. His result states that a continuous function $f\colon [0,\infty) \to \mathbb{R}$ belongs to ${\rm PD}(\mathbb{R}^n,\rho)$ if and only if
\begin{gather*}
f(t)=\int_{[0,\infty)}\Omega_n(wt)\, {\rm d}\mu(w),\qquad t\geq 0,
\end{gather*}
where $\mu$ is a finite and positive measure on $[0,\infty)$ while $\Omega_n(x)=\Omega_n(\rho(x,0))$ is the mean value of $y\in S^{n-1} \mapsto {\rm e}^{{\rm i} x\cdot y}$ over $S^{n-1}$. Here, $\cdot$ denotes the usual inner product in $\mathbb{R}^n$, $S^{n-1}$ is the unit sphere in $\mathbb{R}^{n}$, if $n\geq 2$, while $S^{0}=\{-1,1\}$. He also characterized the class
${\rm PD}(\mathcal{H},\rho)$ where $\mathcal{H}$ is an infinite-dimensional Hilbert space and $\rho$ is the distance defined by its norm: a~continuous function $f\colon[0,\infty) \to \mathbb{R}$ belongs to ${\rm PD}(\mathcal{H},\rho)$ if and only if $t\in (0,\infty) \mapsto f\big(t^{1/2}\big)$ is completely monotone. Recall that a function $f\colon (0,\infty) \to \mathbb{R}$ is {\it completely monotone} if it has derivatives of all orders and $(-1)^nf^{(n)}(t)\geq 0$, for $t>0$ and $n=0,1,\ldots$. Theorem~7.14 in~\cite{wendland}
provides additional information regarding the class ${\rm PD}(\mathcal{H},\rho)$. In order to obtain the classes ${\rm SPD}(\mathbb{R}^n,\rho)$, $n\geq 2$, and ${\rm SPD}(\mathcal{H},\rho)$, one needs to eliminate the constant functions from
${\rm PD}(\mathbb{R}^n,\rho)$ and ${\rm PD}(\mathcal{H},\rho)$, respectively. Characterizations for some of the classes ${\rm PD}(L^p(A,\mu),\rho)$, where $(A,\mu)$ is a measure space and $\rho$ is given through the $p$-norm of $L^p(A,\mu)$ are presented in~\cite[Chapter~2]{wells}.

Schoenberg also provided characterizations for the classes ${\rm PD}\big(S^d,\rho\big)$, $d\geq 1$, where $\rho$ is now the geodesic distance on $S^d$. His result also included a characterization for the class ${\rm PD}(S^\infty,\rho)$, where $S^\infty$ is the unit sphere in the real Hilbert space $\ell_2$ while $\rho$ is its geodesic distance~\cite{schoen}. R.~Gangolli~\cite{gangolli} extended Schoenberg results to ${\rm PD}(H,\rho)$, where $H$ is any compact two-point homogeneous space and $\rho$ is its invariant Riemannian distance. After a normalization for the distances in these spaces is implemented, one can see that a continuous function $f\colon [0,\pi] \to \mathbb{R}$ belongs to ${\rm PD}(H,\rho)$, if and only if $f$ has a series representation in the form
\begin{gather*}
f(t)=\sum_{k=0}^\infty a_k^H P_k^H(\cos t), \qquad t \in [0,\pi],
\end{gather*}
where $a_k^H\geq 0$ for all $k$ and $\sum_{k=0}^\infty a_k^H P_k^H(1)<\infty$. Here, $P_k^H$ is the monomial $x^k$ if $H=S^\infty$ and a Jacobi polynomial of degree $k$ that depends on the space $H$ being used, otherwise. The classes ${\rm SPD}\big(S^d,\rho\big)$, ${\rm SPD}(S^\infty,\rho)$, and ${\rm SPD}(H,\rho)$ were described in~\cite{barbo,chen} through additional conditions on the sets $\big\{k\colon a_k^H>0\big\}$.

The same was done for the classes ${\rm PD}(X\times Y, \rho,\sigma)$ and ${\rm SPD}(X\times Y, \rho,\sigma)$ for some choices of $(X,\rho)$ and $(Y,\sigma)$. For the case where $X$ and $Y$ are compact two-point homogeneous spaces with their respective Riemannian distances $\rho$ and $\sigma$, the characterization for ${\rm PD}(X\times Y, \rho,\sigma)$ appeared in \cite{barbos,guella}: a continuous function $f\colon [0,\pi]^2 \to \mathbb{R}$ belongs to ${\rm SPD}(X\times Y, \rho,\sigma)$ if and only if
$f$ has a series representation in the form
\begin{gather*}
f(t,u)=\sum_{k,l=0}^\infty a_{k,l}^{X,Y} P_k^X(\cos t)P_l^Y(\cos u), \qquad t,u \in [0,\pi],
\end{gather*}
with $a_{k,l}^{X,Y}\geq 0$ for all $k$ and $l$ and the series being convergent at $(t,u)=(0,0)$. As for ${\rm SPD}(X\times Y, \rho,\sigma)$, a description can be found in \cite{barbos,guella0,guella2,guella1} and depends on additional assumptions on the sets $\big\{k-l\colon a_{k,l}^{X,Y}>0\big\}$. The cases in which $(X,\rho)$ is the usual metric space $\mathbb{R}^n$ and $Y$ is either a compact two-point homogeneous space or $S^\infty$ were considered recently: ${\rm PD}(X\times Y, \rho,\sigma)$ was described in \cite{berg2,berg3,guellam,shapiro} while a description for ${\rm SPD}(X\times Y, \rho,\sigma)$ can be inferred from~\cite{guellam}.

As for the explicit determination of large families in either ${\rm PD}(X,\rho)$ or ${\rm SPD}(X,\rho)$, the most efficient techniques make use of completely monotone functions and conditionally negative definite functions on $(X,\rho)$. A continuous function $f\colon D_X^\sigma \to \mathbb{\mathbb{R}}$ is {\it conditionally negative definite} on $(X,\rho)$, and we write $f \in {\rm CND}(X,\rho)$, if the quadratic forms in (\ref{mainPD}) are nonpositive when the coefficients $c_j$ satisfy $\sum_{j=1}^nc_j=0$. Clearly, this notion can be extended to a cartesian product of quasi-metric spaces so that the symbol ${\rm CND}(X\times Y,\rho,\sigma)$ also makes sense.

The following construction providing an efficient technique follows from Theorem~3.5 in~\cite{mene} along with Lemma~2.5 in~\cite{reams}: if $f$ is a bounded and completely monotone function and $g$ is a~nonnegative valued function in ${\rm CND}(X,\rho)$, then $f\circ g$ belongs to ${\rm PD}(X,\rho)$. Further, $f\circ g$ belongs to ${\rm SPD}(X,\rho)$ if and only if $f$ is nonconstant and $g(t)>g(0)$, for $t \in D_X^\rho\setminus\{0\}$. A~quick analysis reveals that the following extension also holds: if $f$ is a bounded and completely monotone function and $g$ is a nonnegative valued function in ${\rm CND}(X\times Y, \rho,\sigma)$, then $f\circ g$ belongs to ${\rm PD}(X \times Y,\rho,\sigma)$. Further, $f\circ g$ belongs to ${\rm SPD}(X\times Y,\rho,\sigma)$ if and only if $f$ is nonconstant and $g(t,u)>g(0,0)$, for $(t,u) \in D_X^\rho\times D_Y^\sigma$ with $t+u>0$. If we drop the boundedness of $f$, then the results above still hold as long as we assume $g$ is positive-valued.

Motivated by a celebrated result of Gneiting in \cite{gneiting}, an interesting procedure to construct positive definite functions on a cartesian product of quasi-metric spaces was described in \cite{meneg}. If $f$ is a bounded and completely monotone function, $g$ is a nonnegative valued function in~${\rm CND}(X,\rho)$ and $h$ is a positive-valued function in ${\rm CND}(Y,\sigma)$, then the function $F_r$ given by
\begin{gather} \label{menegg}
F_r(t,u)=\frac{1}{h(u)^{r}}f\left(\frac{g(t)}{h(u)}\right),\qquad (t,u)\in D_X^\rho \times D_Y^\sigma,
\end{gather}
belongs to ${\rm PD}(X \times Y,\rho,\sigma)$, as long as $f$ is a bounded generalized Stieltjes function of order $\lambda>0$~\cite{widder} and $r\geq \lambda$. Further, in the case in which $(X,\rho)$ and $(Y,\sigma)$ are metric spaces and~$X$ has at~least two points, $F_r$ belongs to ${\rm SPD}(X\times Y, \rho,\sigma)$ if and only if $f$ is nonconstant, $g(t)>g(0)$ for $t\in D_X^\rho\setminus\{0\}$, and $h(u)>h(0)$ for $u\in D_Y^\sigma\setminus\{0\}$. With some adaptations on the assumptions and specifying $r$ accordingly, similar results can be expanded to the case where $f$ is an unbounded complete monotone function.

In this paper, the target is to establish extensions of the criterion described in the previous paragraph in order to produce functions in the classes ${\rm PD}(X\times Y\times Z, \rho,\sigma,\tau)$ and ${\rm SPD}(X\times Y\times Z, \rho,\sigma,\tau)$ that can be generalized to finitely many quasi-metric spaces. From a practical point of~view, we envisage the results we will prove here to be used in random fields evolving temporally over either a torus or a cylinder. On the other hand, we also intend to prove
mathematical results that resemble some of the models discussed in \cite{allegria,apana} involving positive definiteness for the product of three metric spaces but focusing on the strict positive definiteness of the models. The outline of the paper is as follows: in Section~\ref{sec2}, we tackle the construction of conditionally negative definite functions on a product of quasi-metric spaces. They will be used in the subsequent material and are not frequently dealt with in the literature, except in~some very particular cases. We will provide two simple techniques to construct functions in~${\rm CND}(X\times Y,\rho, \sigma)$ and a third one specific for case in which $X$ is the usual metric space $\mathbb{R}^n$. In Section~\ref{sec3}, we begin describing the main contributions of the paper. We propose a model to construct strictly positive definite functions in a product of three metric spaces given by products of compositions of completely monotone functions and nonnegative valued conditionally negative definite functions. In Section~\ref{sec4}, we~focus on extensions of the model~(\ref{menegg}) to three metric spaces based on generalized Stieltjes functions of order $\lambda>0$. Section~\ref{sec5} contains adaptations of the results proved in Section~\ref{sec4} in order to produce models based on generalized complete Bernstein functions of order $\lambda >0$. In~Sec\-tion~\ref{sec6}, we address two examples that can serve as applications of the main results proved in the paper.

\section[Functions in the class CND(X x Y,rho,sigma)]{Functions in the class
$\mathbf{CND}\boldsymbol{(X\times Y,\rho,\sigma)}$}\label{sec2}

Results that deliver large classes of functions in ${\rm CND}(X\times Y,\rho,\sigma)$ are rare in the literature. Here, we will present two methods that hold in general and another one that holds in the specific case where $X$ is the usual metric space $\mathbb{R}^n$. Two of them depend upon Bernstein functions (see \cite[Chapter 3]{schilling}) the notion of which we now recall. A function $f\colon (0,\infty) \to \mathbb{R}$ is a {\it Bernstein function} if it has derivatives of all orders and $(-1)^{n-1}f^{(n)}(t)\geq 0$, for $t>0$ and $n=1,2,\ldots$. A Bernstein function $f$ has an integral representation in the form
\begin{gather*}
f(w)=a+bw+\int_{(0,\infty)} (1-{\rm e}^{-sw}) \,{\rm d}\mu(s), \qquad w\geq 0,
\end{gather*}
where $a,b\geq 0$ and $\mu$ is a positive measure on $(0,\infty)$ satisfying
\begin{gather*}
\int_{(0,\infty)}(1 \wedge s) \,{\rm d}\mu(s)<\infty.
\end{gather*}
A Bernstein function $f$ can be continuously extended to $0$ by setting $f(0)=\lim_{w\to 0^+} f(w)$. It follows from \cite[Proposition 2.9]{berg0} that if $f$ is a Bernstein function and $g$ is a nonnegative positive-valued function in ${\rm CND}(X,\rho)$, then $f\circ g$ belongs to ${\rm CND}(X,\rho)$. Theorem~\ref{bernesp}
provides a generalization of this fact.

\begin{Theorem}\label{bernesp}\sloppy
Let $f$ be a Bernstein function. If $g$ is a nonnegative valued function in ${\rm CND}(X,\rho)$ and $h$ is a nonnegative valued function in ${\rm CND}(Y,\sigma)$, then the function $\phi$ given~by
	\begin{gather*}
	\phi(t,u)=f(g(t)+h(u)),\qquad (t,u) \in D_X^\rho \times D_Y^\sigma,
	\end{gather*}
	belongs to ${\rm CND}(X\times Y, \rho,\sigma)$.
\end{Theorem}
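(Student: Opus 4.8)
\emph{The plan} is to split the statement into two independent facts and then compose them. Writing $G(t,u):=g(t)+h(u)$, I would show (i) that $G$ is a nonnegative-valued member of $\mathrm{CND}(X\times Y,\rho,\sigma)$, and (ii) that precomposing any Bernstein function with a nonnegative-valued function in $\mathrm{CND}(X\times Y,\rho,\sigma)$ again lands in $\mathrm{CND}(X\times Y,\rho,\sigma)$. Since $\phi=f\circ G$, these two facts deliver the conclusion at once. Fact (ii) is nothing but the kernel-level version of the composition result recalled just before the statement (Bernstein $\circ$ nonnegative CND is CND), now read off on the ground set $X\times Y$ instead of on a single space, so the only real work is to verify that the argument transfers verbatim to the product kernel.

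For (i), I would observe that the map $(t,u)\mapsto g(t)$ belongs to $\mathrm{CND}(X\times Y,\rho,\sigma)$: for any points $(x_j,y_j)$ and scalars with $\sum_j c_j=0$, the quadratic form $\sum_{j,k}c_jc_k\,g(\rho(x_j,x_k))$ simply ignores the $Y$-coordinates and is nonpositive because $g\in\mathrm{CND}(X,\rho)$. Symmetrically, $(t,u)\mapsto h(u)$ lies in $\mathrm{CND}(X\times Y,\rho,\sigma)$. Since $\mathrm{CND}$ is a convex cone (closed under addition and multiplication by nonnegative scalars), the sum $G$ is again conditionally negative definite, and $G\geq 0$ because $g,h\geq 0$. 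This step is routine.

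The substantive step is (ii), which I would handle through the integral representation of the Bernstein function $f$ recalled above. Put $w_{jk}:=g(\rho(x_j,x_k))+h(\sigma(y_j,y_k))=G(\rho(x_j,x_k),\sigma(y_j,y_k))\geq 0$ and take scalars $c_1,\dots,c_n$ with $\sum_{j}c_j=0$. Substituting the representation and using $\sum_{j,k}c_jc_k=\big(\sum_j c_j\big)^2=0$ to kill both the $a$-term and the constant $1$ inside the integral gives
\begin{gather*}
\sum_{j,k=1}^n c_jc_k\,\phi(\rho(x_j,x_k),\sigma(y_j,y_k))
= a\left(\sum_{j=1}^n c_j\right)^2 + b\sum_{j,k=1}^n c_jc_k\,w_{jk}
- \int_{(0,\infty)} \sum_{j,k=1}^n c_jc_k\,\mathrm{e}^{-s w_{jk}}\,\mathrm{d}\mu(s).
\end{gather*}
The first term vanishes; the second is nonpositive since $b\geq 0$ and $G\in\mathrm{CND}(X\times Y,\rho,\sigma)$. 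For the integrand I would invoke the classical theorem of Schoenberg applied to the conditionally negative definite kernel $((x,y),(x',y'))\mapsto G(\rho(x,x'),\sigma(y,y'))$: it implies that $((x,y),(x',y'))\mapsto \mathrm{e}^{-sG(\rho(x,x'),\sigma(y,y'))}$ is positive definite for every $s>0$, so each inner sum $\sum_{j,k}c_jc_k\,\mathrm{e}^{-s w_{jk}}$ is nonnegative and, with the leading minus sign, the whole integral term is nonpositive. Adding the three contributions yields $\sum_{j,k}c_jc_k\,\phi\leq 0$, which is exactly $\phi\in\mathrm{CND}(X\times Y,\rho,\sigma)$.

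\emph{The main obstacle} is less an estimate than two bookkeeping points that legitimize the integral manipulation: first, that the (finite) quadratic form may be pulled inside the integral, which is immediate since each $1-\mathrm{e}^{-s w_{jk}}$ is $\mu$-integrable by the growth condition $\int_{(0,\infty)}(1\wedge s)\,\mathrm{d}\mu(s)<\infty$; and second, the appeal to the Schoenberg equivalence ``CND $\iff$ $\mathrm{e}^{-s(\cdot)}$ positive definite for all $s>0$'' for a kernel built on the \emph{product} set. The latter is purely formal, since Schoenberg's characterization concerns symmetric kernels on an arbitrary set and is indifferent to the fact that ours is assembled from $\rho$ and $\sigma$, but it is the conceptual key that upgrades the one-space composition fact to the product setting. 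If one prefers to bypass Schoenberg altogether, the same conclusion follows by applying the cited composition result directly, with $X\times Y$ taking the role of the underlying space.
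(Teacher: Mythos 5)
Your proof is correct and follows essentially the same route as the paper's: insert the integral representation of $f$, use $\sum_j c_j=0$ to annihilate the constant contributions, handle the linear term by conditional negative definiteness, and reduce the integral term to the positive definiteness of $\mathrm{e}^{-s(g+h)}$ for each $s>0$. The only immaterial difference is that you certify this last positivity via Schoenberg's equivalence applied to the kernel on the product set, whereas the paper invokes Lemma~2.5 of Reams on completely monotone functions of almost positive semidefinite matrices.
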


\begin{proof} Assume $g$ and $h$ are as in the statement of the lemma. Let $n$ be a positive integer, $c_1,\ldots, c_n$ real numbers satisfying $\sum_{j=1}^n c_j=0$, and
$(x_1,y_1), \ldots, (x_n,y_n)$ points in $X\times Y$. Direct calculation shows that
\begin{gather*}
\sum_{j,k=1}^n c_j c_k f(g(\rho(x_j, x_k))+h(\sigma(y_j,y_k))) = b\sum_{j,k=1}^n c_j c_k \left[g(\rho(x_j, x_k))+ h(\sigma(y_j, y_k))\right]
\\ \hphantom{\sum_{j,k=1}^n c_j c_k f(g(\rho(x_j, x_k))+h(\sigma(y_j,y_k))) =}
{} -\int_{[0,\infty)} \sum_{j,k=1}^n c_j c_k{\rm e}^{-s g(\rho(x_j, x_k))-s h(\sigma(y_j, y_k))}\,{\rm d}\mu(s).
	\end{gather*}
	Since the function $x\in (0,\infty) \mapsto {\rm e}^{-x}$ is bounded and completely monotone and the matrix $[-s g(\rho(x_j, x_k))-s h(\sigma(y_j, y_k))]_{j,k=1}^n$ is almost positive semi-definite, then Lemma 2.5 in \cite{reams} implies that
	\begin{gather*}
	\sum_{j,k=1}^n c_j c_k {\rm e}^{- sg(\rho(x_j, x_k))-sh(\sigma(y_j,y_k))}\geq 0.
	\end{gather*}
	Thus,
	\begin{gather*}
	\sum_{j,k=1}^n c_j c_k f(g(\rho(x_j, x_k))+h(\sigma(y_j,y_k))) \leq 0,
	\end{gather*}
	and the proof is complete.
\end{proof}

Here are some examples of functions in ${\rm CND}(X\times Y, \rho,\sigma)$ provided by Theorem~\ref{bernesp} with $g$ and $h$ as there
\begin{gather*}
\phi(t,u)=f(t)+g(u), \qquad \phi(t,u)=f(t)+g(u)+\sqrt{1+f(t)+g(u)},
\\
\phi(t,u)=1-{\rm e}^{-f(t)-g(u)}, \qquad \mbox{and} \qquad \phi(t,u)=\ln(1+f(t)+g(u)).
\end{gather*}

The second method we want to present is based on positive-valued Bernstein functions and holds when one of the spaces is the usual $\mathbb{R}^n$.

\begin{Theorem} \label{auxber} Assume $\mathbb{R}^n$ is endowed with its usual Euclidean distance $\rho$.
	If $(Y,\sigma)$ is a quasi-metric space, $f$ is a positive-valued Bernstein function and $h$ is a positive-valued function in ${\rm CND}(Y,\sigma)$, then
	\begin{gather*}
	(t,u)\in [0,\infty) \times D_Y^\sigma \mapsto -\frac{1}{h(u)^{n/2}}{\rm e}^{-f(t^2/h(u))}
	\end{gather*}
	belongs to ${\rm CND}(\mathbb{R}^n \times Y, \rho, \sigma)$. Further, the formula
\begin{gather*}
	(t,u)\in [0,\infty) \times D_Y^\sigma \mapsto \frac{1}{h(0)^{n/2}}- \frac{1}{h(u)^{n/2}}{\rm e}^{- f(t^2/h(u))},\qquad w>0,
	\end{gather*}
defines a bounded positive-valued function in ${\rm CND}(\mathbb{R}^n \times Y, \rho, \sigma)$.
\end{Theorem}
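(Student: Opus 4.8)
The plan is to derive both assertions from a single positive definiteness statement: that
\[
\Phi(t,u)=\frac{1}{h(u)^{n/2}}\,{\rm e}^{-f(t^2/h(u))}
\]
belongs to ${\rm PD}(\mathbb{R}^n\times Y,\rho,\sigma)$. Granting this, the first function is $-\Phi$, and since the inequalities defining ${\rm CND}$ only constrain coefficients with $\sum_j c_j=0$, the negative of any positive definite function lies in ${\rm CND}$; hence $-\Phi\in{\rm CND}(\mathbb{R}^n\times Y,\rho,\sigma)$. The second function is $h(0)^{-n/2}-\Phi$, and adding a constant preserves membership in ${\rm CND}$, because a constant contributes $\big(\sum_j c_j\big)^2$ times itself, which vanishes on the admissible coefficients. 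So the whole proof reduces to showing $\Phi\in{\rm PD}$, together with the positivity and boundedness needed in the second claim.

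To handle $\Phi$, first I would note that, since $s\mapsto{\rm e}^{-s}$ is completely monotone and $f$ is a Bernstein function, the composition ${\rm e}^{-f}$ is a bounded completely monotone function; Bernstein's theorem then furnishes a finite positive measure $\nu$ on $[0,\infty)$ with ${\rm e}^{-f(s)}=\int_{[0,\infty)}{\rm e}^{-sw}\,{\rm d}\nu(w)$. Substituting $s=t^2/h(u)$ and interchanging the finite sum in the quadratic form with the integral, it suffices to prove that for each fixed $w$ the kernel $(t,u)\mapsto h(u)^{-n/2}{\rm e}^{-wt^2/h(u)}$ is positive definite on $\mathbb{R}^n\times Y$, because the cone of positive definite functions is stable under integration against $\nu$. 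For $w=0$ this kernel is $u\mapsto h(u)^{-n/2}$, which is positive definite on $Y$ as the composition of the (unbounded) completely monotone function $s\mapsto s^{-n/2}$ with the positive-valued function $h\in{\rm CND}(Y,\sigma)$, as recalled in the Introduction.

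The crux is the case $w>0$, where I would exploit the Euclidean structure through the Gaussian Fourier identity
\[
\frac{1}{h(u)^{n/2}}\,{\rm e}^{-w|x-x'|^2/h(u)}=(4\pi w)^{-n/2}\int_{\mathbb{R}^n}{\rm e}^{-h(u)|\xi|^2/(4w)}\,{\rm e}^{-{\rm i}(x-x')\cdot\xi}\,{\rm d}\xi,
\]
in which the powers of $h(u)$ cancel precisely because the exponent is $n/2$. Writing $\rho(x_j,x_k)=|x_j-x_k|$ and $u_{jk}=\sigma(y_j,y_k)$ and forming the quadratic form, the matter reduces to the nonnegativity, for each frequency $\xi$, of $\sum_{j,k}c_jc_k\,{\rm e}^{-(|\xi|^2/4w)h(u_{jk})}\,{\rm e}^{-{\rm i}(x_j-x_k)\cdot\xi}$. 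Here the matrix $\big[{\rm e}^{-{\rm i}(x_j-x_k)\cdot\xi}\big]$ is positive semidefinite, being of the rank-one form $a_j\overline{a_k}$ with $a_j={\rm e}^{-{\rm i}x_j\cdot\xi}$, while $\big[{\rm e}^{-(|\xi|^2/4w)h(u_{jk})}\big]$ is positive semidefinite by Lemma~2.5 in \cite{reams} applied to the almost positive semidefinite matrix $\big[-(|\xi|^2/4w)h(u_{jk})\big]$, since $h\in{\rm CND}(Y,\sigma)$. Their Hadamard product is positive semidefinite, so the frequency-wise form is nonnegative; integrating in $\xi$ and multiplying by the positive constant $(4\pi w)^{-n/2}$ gives positive definiteness of the $w$-kernel. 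This Fourier/Schur step is the main obstacle, the rest being bookkeeping, including the routine continuity of $\Phi$.

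Finally, for the positivity and boundedness in the second assertion I would use that $h\in{\rm CND}(Y,\sigma)$ forces $h(u)\ge h(0)$ for every $u$: testing with two points and coefficients $1,-1$ gives $2h(0)-2h(u)\le0$. Hence $h(u)^{-n/2}\le h(0)^{-n/2}$, while the monotonicity and positivity of $f$ give ${\rm e}^{-f(t^2/h(u))}\le{\rm e}^{-f(0)}<1$. Combining,
\[
0<h(0)^{-n/2}\big(1-{\rm e}^{-f(0)}\big)\le h(0)^{-n/2}-\frac{1}{h(u)^{n/2}}\,{\rm e}^{-f(t^2/h(u))}\le h(0)^{-n/2},
\]
which is exactly strict positivity together with boundedness, completing the plan.
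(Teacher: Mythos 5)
Your argument is correct and follows the same overall strategy as the paper---represent ${\rm e}^{-f(t^2/h(u))}$ as a Laplace-type integral of Gaussian kernels via the Bernstein--Widder theorem, and reduce everything to the positive definiteness of $(t,u)\mapsto h(u)^{-n/2}{\rm e}^{-st^2/h(u)}$---but it differs in two worthwhile respects. First, where the paper simply cites Theorem~3.2$(i)$ of \cite{meolpo} for the positive definiteness of these Gaussian kernels, you prove it from scratch: Fourier inversion for the Gaussian, in which the factor $h(u)^{n/2}$ produced by the $\xi$-integral cancels the prefactor $h(u)^{-n/2}$ precisely because the exponent is $n/2$, reduces the claim to the Schur product of the rank-one Hermitian matrix $\big[{\rm e}^{-{\rm i}(x_j-x_k)\cdot\xi}\big]$ with the matrix $\big[{\rm e}^{-(|\xi|^2/4w)h(\sigma(y_j,y_k))}\big]$, the latter positive semidefinite by Lemma~2.5 of \cite{reams} since $h\in{\rm CND}(Y,\sigma)$. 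This makes the proof self-contained and explains why $n/2$ is the correct power. Second, you make explicit the passage from positive definiteness of $\Phi$ to the two conditional negative definiteness assertions (negation of a PD function, and addition of the constant $h(0)^{-n/2}$, both invisible on the hyperplane $\sum_j c_j=0$), together with the inequality $h(u)\ge h(0)$ needed for boundedness; the paper leaves all of this implicit after ``take $w=1$''.

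One caveat: your closing display asserts the strict lower bound $0<h(0)^{-n/2}\big(1-{\rm e}^{-f(0)}\big)$, which fails when $f(0)=\lim_{w\to0^+}f(w)=0$ (e.g., $f(w)=w$), in which case the second function vanishes at $(0,0)$. This is really a defect in the theorem's wording (``positive-valued'') rather than in your argument---your estimates do give strict positivity whenever $t>0$ or $h(u)>h(0)$, and nonnegativity everywhere---but as written the last inequality needs either the extra hypothesis $f(0)>0$ or the conclusion weakened to nonnegativity.
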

\begin{proof}
	Theorem 3.7 in \cite{schilling} shows that a function $f\colon (0,\infty) \to (0,\infty)$ is a Bernstein function if and only if ${\rm e}^{-wf}$ is completely monotone for all $w>0$. So, if $f$ is a Bernstein function, then the Bernstein--Widder theorem \cite[p.~161]{widder1} leads to the representation
	\begin{gather*}
	{\rm e}^{-w f(t^2/h(u))}=\int_{[0,\infty)} {\rm e}^{-st^2/h(u)}\,{\rm d}\mu_f^w(s),\qquad (t,u)\in [0,\infty) \times D_Y^\sigma,\qquad w>0,
	\end{gather*}
	for some finite and positive measure $\mu_f^w$ on $[0,\infty)$. Since Theorem~3.2$(i)$ in~\cite{meolpo} shows that the functions
	\begin{gather*}
	(t,u)\in [0,\infty) \times D_Y^\sigma \mapsto \frac{1}{h(u)^{n/2}}{\rm e}^{-st^2/h(u)}, \qquad s>0,
	\end{gather*}
	belong to ${\rm PD}(\mathbb{R}^n \times Y, \rho, \sigma)$, we may infer that
	so do
	\begin{gather*}
	(t,u)\in [0,\infty) \times D_Y^\sigma \mapsto \frac{1}{h(u)^{n/2}}{\rm e}^{-w f(t^2/h(u))},\qquad w>0.
	\end{gather*}
	 The theorem follows after we take $w=1$.
 \end{proof}

Under the setting in Theorem~\ref{auxber}, the formula
\begin{gather*}
	(t,u)\in [0,\infty) \times D_Y^\sigma \mapsto \frac{1}{h(0)^{n/2}}- \frac{1}{h(u)^{n/2}}{\rm e}^{-w f(t^2/h(u))},\qquad w>0,
	\end{gather*}
defines bounded functions in ${\rm CND}(\mathbb{R}^n \times Y, \rho, \sigma)$.

Finally, we will provide a method to construct functions in ${\rm CND}(X\times Y, \rho,\sigma)$ via generalized Stieltjes functions. A function $f$ is a {\it generalized Stieltjes function of order} $\lambda >0$, and we will write $\mathcal{S}_\lambda$, if it can be represented in the form
\begin{gather}\label{stieltjes}
f(w)=C_f+\frac{D_f}{w^\lambda}+\int_{(0,\infty)} \frac{1}{(w+s)^\lambda}\,{\rm d}\mu_f(s),\qquad w>0,
\end{gather}
where $C_f=\lim_{w\to \infty}f(w)$, $D_f\geq 0$, and $\mu_f$ is a positive measure on $(0,\infty)$ such that
\begin{gather*}
\int_{(0,\infty)} \frac{1}{(1+s)^\lambda}\, {\rm d}\mu_f(s)<\infty.
\end{gather*}
It is not hard to see that a generalized Stieltjes function $f$ of order $\lambda $ is bounded if and only if
\begin{gather*}
D_f=0 \qquad \mbox{and} \qquad \int_{(0,\infty)}\frac{1}{s^{\lambda}}\,{\rm d}\mu_f(s)<\infty.
\end{gather*}
The set of all bounded functions from $\mathcal{S}_\lambda$ will be written as $\mathcal{S}_\lambda^b$. Examples and additional properties of functions in both $\mathcal{S}_\lambda$ and $\mathcal{S}_\lambda^b$ can be found in \cite{koum,koump,meneg,schilling,sokal} and references quoted in there. It is known that every function in $\mathcal{S}_\lambda$ is completely monotone.

\begin{Theorem} 
	Let $f$ be a function in $\mathcal{S}_\lambda^b$, $g$ a nonnegative valued function in ${\rm CND}(X,\rho)$, and $h$~a~function in ${\rm CND}(Y,\sigma)$. If the function $F_r$ in~\eqref{menegg} is bounded from above by $M>0$, then $M-F_r$ belongs to ${\rm CND}(X\times Y,\rho,\sigma)$.
\end{Theorem}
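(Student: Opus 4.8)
The plan is to exploit the fact that the constant $M$ contributes nothing to a conditionally negative definite test sum. Fix an integer $n$, points $(x_1,y_1),\dots,(x_n,y_n)$ in $X\times Y$, and real scalars $c_1,\dots,c_n$ with $\sum_{j=1}^n c_j=0$. Writing $\rho_{jk}=\rho(x_j,x_k)$ and $\sigma_{jk}=\sigma(y_j,y_k)$, one has
\begin{gather*}
\sum_{j,k=1}^n c_jc_k\big(M-F_r(\rho_{jk},\sigma_{jk})\big)=M\Big(\sum_{j=1}^n c_j\Big)^2-\sum_{j,k=1}^n c_jc_kF_r(\rho_{jk},\sigma_{jk})\\
=-\sum_{j,k=1}^n c_jc_kF_r(\rho_{jk},\sigma_{jk}).
\end{gather*}
Hence the whole statement reduces to the single inequality $\sum_{j,k}c_jc_kF_r(\rho_{jk},\sigma_{jk})\ge 0$ for every coefficient vector with $\sum_j c_j=0$; continuity of $M-F_r$ is inherited from that of $F_r$. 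Note also that finiteness of the upper bound forces $h$ to stay bounded away from $0$, since otherwise $F_r=h(u)^{-r}f(g(t)/h(u))$ would be unbounded; in particular $h$ is positive-valued and $h^{-r}$ is well defined.

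The cleanest way to secure this restricted inequality is to invoke the Gneiting-type positive definiteness of $F_r$ recalled around~\eqref{menegg} (valid, for $f\in\mathcal S_\lambda^b$, when $r\ge\lambda$): membership in ${\rm PD}(X\times Y,\rho,\sigma)$ yields $\sum_{j,k}c_jc_kF_r\ge 0$ for all scalars, a fortiori for those summing to zero. To keep the argument self-contained I would instead re-derive this from the representation~\eqref{stieltjes}. Substituting \eqref{stieltjes} into \eqref{menegg} and simplifying $h(u)^{-r}\big(g(t)/h(u)+s\big)^{-\lambda}=h(u)^{\lambda-r}\big(g(t)+sh(u)\big)^{-\lambda}$ expresses $F_r$ as the sum of $C_f\,h(u)^{-r}$ and the $\mu_f$-average of the elementary kernels
\begin{gather*}
\Phi_s(t,u)=\frac{h(u)^{\lambda-r}}{\big(g(t)+sh(u)\big)^{\lambda}},\qquad s>0.
\end{gather*}

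It then remains to check that each $\Phi_s$, together with the term $C_f\,h(u)^{-r}$, is positive definite on $X\times Y$, after which integrating against $\mu_f$ and adding the $C_f\ge 0$ term preserves positive definiteness. The function $(t,u)\mapsto g(t)+sh(u)$ is nonnegative and lies in ${\rm CND}(X\times Y,\rho,\sigma)$, being the sum of the CND function $g$ of the first variable and a positive multiple of the CND function $h$ of the second; composing with the completely monotone map $w\mapsto w^{-\lambda}$ therefore places $(g(t)+sh(u))^{-\lambda}$ in ${\rm PD}(X\times Y,\rho,\sigma)$. The remaining factor $h(u)^{\lambda-r}$ depends on $u$ alone, and here lies the main obstacle: it is positive definite on $Y$ precisely when its exponent is nonpositive, i.e.\ when $r\ge\lambda$, in which case $w\mapsto w^{-(r-\lambda)}$ is completely monotone and $h(u)^{\lambda-r}\in{\rm PD}(Y,\sigma)\subseteq{\rm PD}(X\times Y,\rho,\sigma)$. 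Taking the Schur product of these two positive definite kernels makes $\Phi_s$ positive definite, and the same completely-monotone-of-CND argument handles $C_f\,h(u)^{-r}$. This delivers $F_r\in{\rm PD}(X\times Y,\rho,\sigma)$, hence the restricted inequality, and therefore $M-F_r\in{\rm CND}(X\times Y,\rho,\sigma)$. The boundedness hypothesis plays exactly the role of guaranteeing that $M$ is finite and that $h$ stays away from $0$, legitimizing both the division by $h^r$ and the final subtraction.
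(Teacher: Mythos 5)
Your proposal is correct and takes essentially the same route as the paper: the paper's entire proof is the observation that $F_r$ belongs to ${\rm PD}(X\times Y,\rho,\sigma)$ (cited from Theorem~2.4$(i)$ of Menegatto's paper on generalized Stieltjes functions), from which $\sum_{j,k}c_jc_k(M-F_r)=-\sum_{j,k}c_jc_kF_r\le 0$ for zero-sum coefficients is immediate. Your self-contained re-derivation of that positive definiteness from the representation \eqref{stieltjes} is sound and mirrors the argument the paper itself gives for the three-space analogue in Section~\ref{sec4}; the only quibble is your side remark that boundedness of $F_r$ forces $h$ to be bounded away from $0$ (not true in general, e.g., when $C_f=0$ and $r=\lambda$), but this is harmless since positivity of $h$ is already part of the setup of \eqref{menegg}.
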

\begin{proof}
This follows from Theorem 2.4$(i)$ in \cite{meneg} where it is proved that $F_r$ belongs to \linebreak${\rm PD}(X\times Y,\rho,\sigma)$.
\end{proof}

\section[Products in PD(X x Y x Z, rho,sigma,tau)]{Products in $\mathbf{PD}\boldsymbol{(X\times Y\times Z, \rho,\sigma,\tau)}$}\label{sec3}

In this section, we will present models that may belong to either ${\rm PD}(X\times Y\times Z, \rho,\sigma,\tau)$ or~${\rm SPD}(X\times Y\times Z, \rho,\sigma,\tau)$ based upon compositions of completely monotone functions and conditionally negative definite functions. This methodology, and also the others to come in Sections~\ref{sec4} and~\ref{sec5}, presupposes the existence of conditionally negative definite functions on a product of quasi-metric spaces, reason why Section~\ref{sec2} was included here.

The Schur product theorem \cite[p.~479]{horn} implies that if $f_1$ and $f_2$ are completely monotone functions and $g$ and $h$ are positive-valued functions in ${\rm CND}(X,\rho)$ and ${\rm CND}(Y\times Z,\sigma,\tau)$, respectively, then the function $F$ given by
\begin{gather}\label{efona}
F(t,u,v)=f_1(g(t))f_2(h(u,v)),\qquad (t,u,v)\in D_X^\rho \times D_Y^\sigma \times D_Z^\tau,
\end{gather}
belongs to ${\rm PD}(X\times Y \times Z, \rho,\sigma,\tau)$. And, if $f_1$ and $f_2$ are bounded, we can even assume $g$ and $h$ are nonnegative valued. Theorem~\ref{produ} provides a setting in which the strict positive definiteness of the model can be granted.

\begin{Theorem} \label{produ} Assume $(X,\rho)$, $(Y, \sigma)$ and $(Z,\tau)$ are metric spaces. Let $f_1$ and $f_2$ be nonconstant completely monotone functions and $g$ and $h$ positive-valued functions in ${\rm CND}(X,\rho)$ and ${\rm CND}(Y\times Z,\sigma,\tau)$, respectively. The following assertions concerning the function $F$ given by~\eqref{efona} are equivalent:
	\begin{itemize}\itemsep=0pt
		\item[$(i)$] $F$ belongs to ${\rm SPD}(X\times Y \times Z, \rho,\sigma,\tau)$.
		\item[$(ii)$] $g(t)>g(0)$, for $t \in D_X^\rho\setminus\{0\}$, and $h(u,v)>h(0,0)$, for $(u,v) \in D_Y^\sigma \times D_Z^\tau$, with $u+v>0$.
	\end{itemize}
\end{Theorem}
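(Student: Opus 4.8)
The plan is to reduce the product structure to the two single-factor problems already understood through the composition criteria recalled in the Introduction, and then to glue the factors together with a Schur-product argument that keeps track of strictness.

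First I would record a reformulation of condition $(ii)$. Since $g$ is positive-valued and lies in ${\rm CND}(X,\rho)$ and $f_1$ is nonconstant completely monotone, the unbounded version of the composition criterion stated in the Introduction gives $f_1\circ g\in{\rm PD}(X,\rho)$, with $f_1\circ g\in{\rm SPD}(X,\rho)$ if and only if $g(t)>g(0)$ for $t\in D_X^\rho\setminus\{0\}$. Likewise $f_2\circ h\in{\rm PD}(Y\times Z,\sigma,\tau)$, with $f_2\circ h\in{\rm SPD}(Y\times Z,\sigma,\tau)$ if and only if $h(u,v)>h(0,0)$ for $(u,v)$ with $u+v>0$. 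Thus $(ii)$ is exactly the assertion that both $f_1\circ g$ and $f_2\circ h$ are strictly positive definite on their respective spaces, and when $(ii)$ fails at least one of them is not. I would also note that a nonconstant completely monotone function is strictly positive on $(0,\infty)$ (a completely monotone function vanishing at a single point vanishes identically, by monotonicity and real-analyticity), so $f_1(g(0))>0$ and $f_2(h(0,0))>0$; this is used in the easy direction.

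For $(i)\Rightarrow(ii)$ I would argue by contraposition. If $(ii)$ fails then, by the reformulation, one factor is not strictly positive definite; say $f_1\circ g\notin{\rm SPD}(X,\rho)$, the other case being symmetric. Then there are distinct $x_1,\ldots,x_m$ in $X$ and scalars $a_1,\ldots,a_m$, not all zero, with $\sum_{j,k}a_ja_k(f_1\circ g)(\rho(x_j,x_k))=0$. Fixing any $(y_0,z_0)\in Y\times Z$ and testing $F$ on the distinct triples $(x_1,y_0,z_0),\ldots,(x_m,y_0,z_0)$ yields the quadratic form $f_2(h(0,0))\sum_{j,k}a_ja_k(f_1\circ g)(\rho(x_j,x_k))=0$ with the positive factor $f_2(h(0,0))$, so $F\notin{\rm SPD}(X\times Y\times Z,\rho,\sigma,\tau)$.

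The substantive direction is $(ii)\Rightarrow(i)$, and the obstacle is precisely that strict positive definiteness on $X$ only controls configurations with distinct first coordinates, whereas distinct triples in $X\times Y\times Z$ may share an $x$-coordinate. Given distinct triples $(x_j,y_j,z_j)$ and scalars $c_j$ not all zero, set $A=[(f_1\circ g)(\rho(x_j,x_k))]$ and $B=[(f_2\circ h)(\sigma(y_j,y_k),\tau(z_j,z_k))]$, both positive semidefinite, so the matrix of $F$ is the Hadamard product $A\circ B$ and the form is nonnegative by the Schur product theorem. To force strictness I would fix a Gram representation $B_{jk}=\sum_l b_j^{(l)}b_k^{(l)}$ and write $c^\top(A\circ B)c=\sum_l(d^{(l)})^\top A\,d^{(l)}$ with $d^{(l)}_j=c_jb_j^{(l)}$, each summand being nonnegative. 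Assuming the total vanishes, I would group the indices by common $x$-value $\xi_a$; since the principal submatrix of $A$ indexed by the distinct $\xi_a$ is positive definite (this is the SPD of $f_1\circ g$), the identity $(d^{(l)})^\top A\,d^{(l)}=0$ forces $\sum_{j:\,x_j=\xi_a}c_jb_j^{(l)}=0$ for every group and every $l$. Within a fixed group the points $(y_j,z_j)$ are automatically distinct (the triples are distinct while $x$ is constant), so $\sum_{j,k:\,x_j=x_k=\xi_a}c_jc_kB_{jk}=\sum_l\big(\sum_{j:\,x_j=\xi_a}c_jb_j^{(l)}\big)^2=0$, and the SPD of $f_2\circ h$ on $Y\times Z$ forces $c_j=0$ throughout the group. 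Ranging over all groups gives $c=0$, a contradiction, whence $c^\top(A\circ B)c>0$ and $F\in{\rm SPD}(X\times Y\times Z,\rho,\sigma,\tau)$. I expect this grouping together with the Gram-decomposition step to be the crux, since it is exactly where distinctness of triples (rather than of single coordinates) is converted into genuine strictness.
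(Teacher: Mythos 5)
Your argument is correct, but the hard direction $(ii)\Rightarrow(i)$ follows a genuinely different route from the paper's. The paper invokes the Bernstein--Widder representation to reduce everything to the exponential kernels $(t,u,v)\mapsto {\rm e}^{-g(t)s-h(u,v)s'}$ with $s,s'>0$, and then applies Reams' Lemma~2.5, which characterizes exactly when the exponential of an almost positive semidefinite matrix is positive definite; the strictness is then read off from the entrywise inequality $g(0)s+h(0,0)s'<g(\rho(x_j,x_k))s+h(\sigma(y_j,y_k),\tau(z_j,z_k))s'$ for $j\neq k$, which holds because distinct triples must differ in at least one coordinate. You instead prove, in effect, a strict Schur product lemma: if $A=[(f_1\circ g)(\rho(x_j,x_k))]$ is positive definite after collapsing indices with equal $x$-coordinate and the principal submatrix of $B=[(f_2\circ h)(\sigma(y_j,y_k),\tau(z_j,z_k))]$ over each such group is positive definite, then $A\circ B$ is positive definite; the Gram decomposition of $B$ plus the grouping by $x$-value converts distinctness of triples into genuine strictness. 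Your route is more modular and more general --- it shows that the product of any kernel in ${\rm SPD}(X,\rho)$ with any kernel in ${\rm SPD}(Y\times Z,\sigma,\tau)$ lies in ${\rm SPD}(X\times Y\times Z,\rho,\sigma,\tau)$, with complete monotonicity entering only through the composition criteria quoted in the Introduction --- and it sidesteps the paper's somewhat delicate bookkeeping of the measures $\mu_1$, $\mu_2$ and the nonconstancy of $f_1$, $f_2$ when passing from the exponentials back to $F$. The paper's route is shorter once Reams' lemma is available and stays closer to the techniques reused in Theorems~\ref{victor1}--\ref{victor3}. Your contrapositive direction is also slightly more roundabout than the paper's explicit $2\times2$ singular matrices, but it is valid, and your observation that a nonconstant completely monotone function is strictly positive (needed to keep the factor $f_2(h(0,0))$ nonzero) is the right supporting fact.
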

\begin{proof}
	If $g(t)=g(0)$ for some $t\in D_X^\rho \setminus\{0\}$, we can pick two distinct points $(x_1,y_1,z_1)$ and $(x_2,y_2,z_2)$ in $X\times Y\times Z$ with
	$\rho(x_1,x_2)=t$, $y_1=y_2$, and $z_1=z_2$ in order to obtain the singular matrix
	\begin{gather*}
	\left[F(\rho(x_j,x_k),\sigma(y_j,y_k), \tau(z_j,z_k))\right]_{j,k=1}^2= \left[f_1(g(0))f_2(h(0,0))\right]_{j,k=1}^2.
	\end{gather*}
	If $h(u,v)=h(0,0)$ for some $(u,v)\in D_Y^\sigma \times D_Z^\tau$ with $u+v>0$, we can pick two distinct points $(x_1,y_1,z_1)$ and $(x_2,y_2,z_2)$ in $X\times Y\times Z$ with
	$x_1=x_2$, $\sigma(y_1,y_2)=u$, and $\tau(z_1,z_2)=v$ in order to obtain the very same singular matrix. In either case, $F$ cannot belong to ${\rm SPD}(X\times Y\times Z, \rho,\sigma,\tau)$ and the implication $(i)\Rightarrow(ii)$ follows.
	As for the converse, first we invoke the Bernstein--Widder theorem to write
	\begin{gather*}
	f_1(g(t))f_2(h(u,v)) = \int_{[0,\infty)}\left[\int_{[0,\infty)}{\rm e}^{{-g(t)s-h(u,v)s'}}\,{\rm d}\mu_{1}(s)\right]{\rm d}\mu_{2}(s'),
	\end{gather*}
	where $\mu_1$ and $\mu_2$ are (not necessarily finite) positive measures on $[0,\infty)$. Recalling the proof of~Theorem~\ref{bernesp}, we know already that the functions
	\begin{gather*}
	(t,u,v) \in D_X^\rho\times D_Y^\sigma \times D_Z^\tau \mapsto {\rm e}^{{-g(t)s-h(u,v)s'}}, \qquad s,s'>0,
	\end{gather*}
	belong to ${\rm PD}(X\times Y \times Z, \rho,\sigma, \tau)$. Hence, so do the functions
	\begin{gather}\label{double}
	(t,u,v) \in D_X^\rho\times D_Y^\sigma D_Z^\tau \mapsto \int_{[0,\infty)}{\rm e}^{{-g(t)s}}{{\rm e}^{{-h(u,v)s'}}}\,{\rm d}\mu_{1}(s), \qquad s'>0.
	\end{gather}
	If $f_2$ is nonconstant, $F$ will belong to ${\rm SPD}(X\times Y \times Z, \rho,\sigma, \tau)$ if we can show that the functions in (\ref{double})
	belong to ${\rm SPD}(X\times Y \times Z, \rho,\sigma, \tau)$. However, if $f_1$ is nonconstant, it is promptly seen that $F$ will belong to ${\rm SPD}(X\times Y \times Z, \rho,\sigma, \tau)$ as long as can show that
	the functions
	\begin{gather*}
	(t,u,v) \in D_X^\rho \times D_Y^\sigma \times D_Z^\tau \mapsto {\rm e}^{{-g(t)s-h(u,v)s'}}, \qquad s,s'>0,
	\end{gather*}
	belong to ${\rm SPD}(X\times Y \times Z, \rho,\sigma)$. So, in order to complete the proof, we will show that, under the assumptions in $(ii)$,
	the matrices
	\begin{gather*}
	\big[{\rm e}^{{-g(\rho(x_j,x_k))s - h(\sigma(y_j,y_k),\tau(z_j,z_k))s'}}\big]_{j,k=1}^n
	\end{gather*}
	are positive definite whenever $s,s'>0$ and
	$(x_1,y_1,z_1)$, \ldots, $(x_n,y_n,z_n)$ are distinct points in $X\times Y\times Z$. If $n=1$, there is nothing to be proved. If $n\geq 2$, according to Lemma 2.5 in~\cite{reams}, the aforementioned positive definiteness will hold if and only if
	\begin{gather} \label{reamv}
	g(0)s+h(0)s' < g(\rho(x_j,x_k))s+h(\sigma(y_j,y_k),\tau(z_j,z_k))s',\qquad j\neq k.
	\end{gather}
	If $x_j\neq x_k$, then $\rho(x_j,x_k)>0$ and the assumption on $g$ implies that $g(\rho(x_j,x_k))>g(0)$. If~\mbox{$y_j\neq y_k$}, then $\sigma(y_j,y_k)>0$ and the assumption on $h$ implies that
	$h(\sigma(y_j,y_k), \tau(z_j,z_k))> h(0,0)$. The same can be inferred if $z_j\neq z_k$. Thus, in any case, (\ref{reamv}) holds.
\end{proof}

The model given by (\ref{efona}) has a considerable drawback: the variables $u$ and $v$ are separated from $t$. Since separability is usually not present in models that come from applications, the results in the next sections may be interpreted as an attempt to provide models with no such inconvenience.

\section{Models based on generalized Stieltjes functions}
\label{sec4}

Here, we will extend and analyze the model (\ref{menegg}) for three quasi-metric spaces. Since there is more than one way to do this, we will begin with one possible extension of (\ref{menegg}) and will establish a basic necessary condition for its strict positive definiteness.

\begin{Theorem}\label{victor1}
	Let $f$ be a function in $\mathcal{S}_\lambda$, $g$~a~positive-valued function in ${\rm CND}(X,\rho)$ and $h$~a~posi\-tive-valued function in ${\rm CND}(Y\times Z,\sigma, \tau)$.
	For $r\geq \lambda$, set
	\begin{gather}\label{mother1}
	G_r(t,u,v)=\frac{1}{h(u,v)^r}f\left(\frac{g(t)}{h(u,v)} \right),\qquad (t,u,v)\in D_X^{\rho}\times D_{Y}^{\sigma}\times D_{Z}^{\tau}.
	\end{gather}
	The following assertions hold:
	\begin{itemize}\itemsep=0pt
		\item[$(i)$] $G_r$ belongs to ${\rm PD}(X\times Y \times Z, \rho,\sigma,\tau)$.
		\item[$(ii)$] If $G_r$ belongs to ${\rm SPD}(X\times Y \times Z, \rho,\sigma,\tau)$, then $g(t)>g(0)$, for $t \in D_X^\rho\setminus\{0\}$, and $h(u,v)>h(0,0)$, for $(u,w) \in D_Y^\sigma\times D_Z^\tau$ with $u+v>0$.
	\end{itemize}
\end{Theorem}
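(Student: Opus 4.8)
The plan is to prove $(i)$ by inserting the generalized Stieltjes representation \eqref{stieltjes} of $f$ into \eqref{mother1} and decomposing $G_r$ into a sum of manifestly positive definite pieces. Writing $w=g(t)/h(u,v)$, which is positive because $g$ and $h$ are positive-valued, I would obtain
\begin{gather*}
G_r(t,u,v)=\frac{C_f}{h(u,v)^r}+\frac{D_f}{h(u,v)^{r-\lambda}g(t)^\lambda}+\int_{(0,\infty)}\frac{1}{h(u,v)^{r-\lambda}(g(t)+s\,h(u,v))^\lambda}\,{\rm d}\mu_f(s),
\end{gather*}
where the elementary identity $h^{-r}(g/h+s)^{-\lambda}=h^{-(r-\lambda)}(g+s\,h)^{-\lambda}$ is exactly the step that makes the exponent $r-\lambda\ge 0$ surface. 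I would then handle the three summands separately.

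For the first summand I would use that $C_f=\lim_{w\to\infty}f(w)\ge 0$ (as $f$ is completely monotone, hence nonnegative) together with the composition rule recalled in the introduction: since $h$ is positive-valued in ${\rm CND}(Y\times Z,\sigma,\tau)$ and $w\mapsto w^{-r}$ is completely monotone, $h^{-r}$ lies in ${\rm PD}(Y\times Z,\sigma,\tau)$, hence in ${\rm PD}(X\times Y\times Z,\rho,\sigma,\tau)$. The second summand is a product of $g^{-\lambda}\in{\rm PD}(X,\rho)$ and $h^{-(r-\lambda)}\in{\rm PD}(Y\times Z,\sigma,\tau)$ (the latter being the constant $1$ when $r=\lambda$), so the Schur product theorem delivers positive definiteness on the product. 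The decisive summand is the integral: for each fixed $s>0$, I would observe that $(t,u,v)\mapsto g(t)+s\,h(u,v)$ is a positive-valued function in ${\rm CND}(X\times Y\times Z,\rho,\sigma,\tau)$, being the sum of the lift of $g$ and of $s\,h$, each of which is conditionally negative definite on the product. Composing with the completely monotone $w\mapsto w^{-\lambda}$ and multiplying by $h^{-(r-\lambda)}\in{\rm PD}$ then makes the integrand positive definite, and integrating against the positive measure $\mu_f$ preserves positive definiteness, the interchange of the quadratic form with the integral being justified by the finiteness of $G_r$ at each point.

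For $(ii)$ I would argue by contraposition, exactly as in the first half of the proof of Theorem~\ref{produ}. If $g(t_0)=g(0)$ for some $t_0\in D_X^\rho\setminus\{0\}$, I would choose $x_1,x_2$ with $\rho(x_1,x_2)=t_0$ and set $y_1=y_2$, $z_1=z_2$; the two resulting points of $X\times Y\times Z$ are distinct, yet the off-diagonal entry $G_r(t_0,0,0)$ equals the diagonal entry $G_r(0,0,0)$, producing a singular rank-one $2\times 2$ matrix and so violating strict positive definiteness with $c_1=1$, $c_2=-1$. Symmetrically, if $h(u_0,v_0)=h(0,0)$ for some $(u_0,v_0)$ with $u_0+v_0>0$, I would take $x_1=x_2$ and points with $\sigma(y_1,y_2)=u_0$, $\tau(z_1,z_2)=v_0$ (distinct precisely because $u_0+v_0>0$), whereupon $G_r(0,u_0,v_0)=G_r(0,0,0)$ yields the same singular matrix.

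I expect the main obstacle to lie in the integral summand of $(i)$: recognizing that the sum $g(t)+s\,h(u,v)$ is conditionally negative definite on the \emph{product} space is what allows the completely monotone composition to apply globally rather than factorwise, and one must carefully justify moving the quadratic form inside the integral. The positivity, rather than mere nonnegativity, of $g$ and $h$ is essential here, since $w\mapsto w^{-\lambda}$ and $w\mapsto w^{-r}$ are \emph{unbounded} completely monotone functions and the composition rule for such functions requires a positive-valued argument.
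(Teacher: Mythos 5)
Your proposal is correct and follows essentially the same route as the paper: the same three-term decomposition of $G_r$ obtained by substituting the Stieltjes representation, positive definiteness of each summand via completely monotone compositions and Schur products, and the same rank-one $2\times 2$ singular-matrix argument for $(ii)$. The only (immaterial) difference is that for the integral summand you invoke the composition rule for the unbounded completely monotone $w\mapsto w^{-\lambda}$ applied to the positive-valued conditionally negative definite sum $g(t)+s\,h(u,v)$ directly, whereas the paper reproves this on the spot via the identity $\Gamma(\lambda)(s+t)^{-\lambda}=\int_0^\infty {\rm e}^{-sw}{\rm e}^{-tw}w^{\lambda-1}\,{\rm d}w$.
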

\begin{proof}
	Inserting the integral representation \eqref{stieltjes} for $f$ in \eqref{mother1} leads to the formula
	\begin{gather*}
	G_r(t,u,v) = \frac{C_f}{h(u,v)^r}+\frac{D_f}{g(t)^{\lambda}h(u,v)^{r-\lambda}}
	+\frac{1}{h(u,v)^{r-\lambda}}\int_{(0,\infty)} \frac{1}{[g(t)+sh(u,v)]^\lambda}\,{\rm d}\mu_f(s).
	\end{gather*}
	In order to prove $(i)$, it suffices to show that each of the three summands above belongs \linebreak to~${\rm PD}(X\times Y \times Z,\rho,\sigma,\tau)$. Once the functions
	\begin{gather*}
	w\in (0,\infty) \mapsto \frac{1}{w^{\alpha}}, \qquad \alpha=\lambda,r,r-\lambda,
	\end{gather*}
	are known to be completely monotone, some of the basic results quoted at the introduction of the paper show that $t\in D_X^\rho \mapsto g(t)^{-\lambda}$ belongs to ${\rm PD}(X,\rho)$, while
	$(u,w)\in D_Y^\sigma \times D_Z^\tau \mapsto h(u,w)^{\alpha}$, $\alpha=r,r-\lambda$, belongs to ${\rm PD}(Y\times Z, \sigma,\tau)$. Hence, it is easily seen that all the functions
	$(t,u,v)\in D_X^\rho\times D_Y^\sigma \times D_Z^\tau \mapsto g(t)^{-\lambda}$ and $(t,u,v)\in D_X^\rho\times D_Y^\sigma \times D_Z^\tau \mapsto h(u,v)^{\alpha}$, $\alpha=r,r-\lambda$, belong to ${\rm PD}(X\times Y\times Z,\rho,\sigma,\tau)$. The fact that
	${\rm PD}(X\times Y\times Z,\rho,\sigma,\tau)$ is closed under products is all that is needed in order to see that $(t,u,v)\in D_X^\rho\times D_Y^\sigma \times D_Z^\tau \mapsto g(t)^{-\lambda}h(u,v)^{r-\lambda}$ also belongs to ${\rm PD}(X\times Y\times Z,\rho,\sigma,\tau)$. It remains to show that the third summand belongs to ${\rm PD}(X\times Y\times Z,\rho,\sigma,\tau)$. Since $w\in (0,\infty) \mapsto {\rm e}^{-w}$ is completely monotone,
	the same reasoning reveals that $(t,u,v)\in D_X^\rho \times D_Y^\sigma \times D_Z^\tau \mapsto \exp(-wg(t)-wh(u,v))$ belongs to ${\rm PD}(X\times Y \times Z,\rho,\sigma,\tau)$ for $w>0$. The fact that integration with respect to an independent parameter does not affect positive definiteness and the elementary identity
	\begin{gather}\label{exp}
	\frac{\Gamma(\lambda)}{(s+t)^{\lambda}}=\int_0^\infty {\rm e}^{{-sw}}{\rm e}^{{-tw}}w^{\lambda-1}\,{\rm d}w,\qquad s,t>0,
	\end{gather}
	now imply that all the functions
	\begin{gather*}
	(t,u,v)\in D_X^\rho \times D_Y^\sigma \times D_Z^\tau \mapsto \frac{1}{[g(t)+sh(u,v)]^\lambda}, \qquad s>0,
	\end{gather*}
	belong to ${\rm PD}(X\times Y\times Z,\rho,\sigma,\tau)$. But, since ${\rm PD}(X\times Y\times Z,\rho,\sigma,\tau)$ is closed under products, we see that the remaining third summand
	\begin{gather*}
	(t,u,v)\in D_X^\rho \times D_Y^\sigma \times D_Z^\tau \mapsto \frac{1}{h(u,v)^{r-\lambda}}\int_{(0,\infty)} \frac{1}{[g(t)+sh(u,v)]^\lambda}\,{\rm d}\mu_f(s), \qquad s>0,
	\end{gather*}
	also belongs to ${\rm PD}(X\times Y\times Z,\rho,\sigma,\tau)$, completing the proof of $(i)$. If $g(t)=g(0)$, for some $t \in D_X^\rho\setminus\{0\}$, by picking two distinct points $(x_1,y_1,z_1)$ and $(x_2,y_2,z_2)$ in $X\times Y\times Z$ such that $\rho(x_1,x_2)=t$, $y_1=y_2$, and $z_1=z_2$, we obtain the singular matrix
	\begin{gather*}
	\left[G_r(\rho(x_j,x_k),\sigma(y_j,y_k), \tau(z_j,z_k))\right]_{j,k=1}^2=\left[\frac{1}{h(0,0)^r}f\left(\frac{g(0)}{h(0,0)}\right) \right]_{j,k=1}^2.
	\end{gather*}
	If $h(u,v)=h(0,0)$, for $(u,v) \in D_Y^\sigma \times D_Z^\tau$ with $u+v>0$, we can take two distinct points $(x_1,y_1,z_1)$ and $(x_2,y_2,z_2)$ in $X\times Y\times Z$ such that $x_1=x_2$, $\sigma(y_1,y_2)=u$, and $\tau(z_1,z_2)=v$ in order to obtain the very same singular matrix. In either case, we may infer that $G_r$ cannot belong to $ {\rm SPD}(X\times Y\times Z,\rho,\sigma,\tau)$. In any case, $G_r$ cannot belong to ${\rm SPD}(X\times Y \times Z, \rho,\sigma,\tau)$ and $(ii)$ follows.
\end{proof}

Henceforth, we will say a quasi-metric space is {\it nontrivial} if it contains at least two points. Theorem~\ref{victor1,5} provides additional necessary conditions for the strict positive definiteness of the model in Theorem~\ref{victor1} in some specific cases.

\begin{Theorem}\label{victor1,5}
	Let $f$ be a function in $\mathcal{S}_\lambda$, $g$ a~posi\-tive-valued function in ${\rm CND}(X,\rho)$ and $h$~a~positive-valued function in ${\rm CND}(Y\times Z,\sigma, \tau)$. The following assertion holds for the func\-tion~$G_r$ in \eqref{mother1}:
	\begin{itemize}\itemsep=0pt
		\item[$(i)$] If $(X,\rho)$ is nontrivial and $G_r$ belongs to ${\rm SPD}(X\times Y\times Z,\rho,\sigma,\tau)$, then either $D_f>0$ or~$\mu_f$ is not the zero measure.
	\end{itemize}
	Further, in the case in which $r=\lambda$ and $D_f>0$, the following additional conclusion holds:
	\begin{itemize}\itemsep=0pt
		\item[$(ii)$] If either $(Y,\sigma)$ or $(Z,\tau)$ is nontrivial and $G_\lambda$ belongs to ${\rm SPD}(X\times Y\times Z,\rho,\sigma,\tau)$, then either $C_f>0$ or $\mu_f$ is not the zero measure.
	\end{itemize}
\end{Theorem}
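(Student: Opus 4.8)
The plan is to prove both necessary conditions by contradiction, in each case showing that if the stated integral-representation data degenerate then $G_r$ reduces to a function that fails to be strictly positive definite on the nontrivial factor. I would rely throughout on the decomposition of $G_r$ into its three summands already computed in the proof of Theorem~\ref{victor1}, namely
\begin{gather*}
G_r(t,u,v) = \frac{C_f}{h(u,v)^r}+\frac{D_f}{g(t)^{\lambda}h(u,v)^{r-\lambda}}
+\frac{1}{h(u,v)^{r-\lambda}}\int_{(0,\infty)} \frac{1}{[g(t)+sh(u,v)]^\lambda}\,{\rm d}\mu_f(s).
\end{gather*}

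For part $(i)$, I would argue the contrapositive: suppose $D_f=0$ and $\mu_f$ is the zero measure. Then only the first summand survives, so $G_r(t,u,v)=C_f/h(u,v)^r$, which does not depend on $t$ at all. Since $(X,\rho)$ is nontrivial, I can choose two distinct points of $X\times Y\times Z$ sharing the same $Y$- and $Z$-coordinates but with distinct $X$-coordinates; the resulting $2\times2$ matrix of values of $G_r$ has two identical rows and is therefore singular. Hence $G_r\notin{\rm SPD}(X\times Y\times Z,\rho,\sigma,\tau)$, which establishes $(i)$ by contraposition.

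For part $(ii)$, assume now $r=\lambda$ and $D_f>0$, and argue again by contraposition: suppose $C_f=0$ and $\mu_f$ is the zero measure. With $r=\lambda$ the exponents $r-\lambda$ vanish, so the second summand becomes $D_f/g(t)^{\lambda}$, which is independent of $(u,v)$; under our degeneracy hypothesis this is in fact all of $G_\lambda$, so $G_\lambda(t,u,v)=D_f\,g(t)^{-\lambda}$ depends only on $t$. Because either $(Y,\sigma)$ or $(Z,\tau)$ is nontrivial, I can select two distinct points of $X\times Y\times Z$ with equal $X$-coordinate but differing in the nontrivial factor (and the other coordinate arbitrary), producing again a $2\times2$ matrix with two identical rows, hence singular. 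Thus $G_\lambda\notin{\rm SPD}(X\times Y\times Z,\rho,\sigma,\tau)$, proving $(ii)$.

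The only delicate point is bookkeeping the vanishing of the exponents when $r=\lambda$ in part $(ii)$: one must verify that under $C_f=0$ and $\mu_f=0$ the surviving term genuinely loses its dependence on $(u,v)$, which is exactly why the hypothesis $r=\lambda$ is imposed there rather than the general $r\geq\lambda$ of Theorem~\ref{victor1}. Once that is checked, the rest is the same two-identical-rows singularity argument used in the proof of Theorem~\ref{victor1}$(ii)$, so no new machinery beyond that singular-matrix observation is required.
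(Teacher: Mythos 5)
Your proposal is correct and follows essentially the same route as the paper: in each case one assumes the degenerate data, observes that $G_r$ collapses to a function constant in the coordinates of the nontrivial factor, and exhibits a singular $2\times 2$ matrix built from two distinct points differing only in those coordinates. No further comment is needed.
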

\begin{proof}
	If $(X,\rho)$ is nontrivial, $D_f=0$ and $\mu_f$ is the zero measure, then we can take two distinct points $(x_1,y_1,z_1)$ and $(x_2,y_2,z_2)$ in $X\times Y\times Z$ with $y_1=y_2$ and $z_1=z_2$ in order to obtain the singular matrix
	\begin{gather*}
	\left[G_r(\rho(x_j,x_k),\sigma(y_j,y_k), \tau(z_i,z_j))\right]_{j,k=1}^2=\left[\frac{C_f}{h(0,0)^r} \right]_{j,k=1}^2.
	\end{gather*}
	Similarly, if either $(Y,\sigma)$ or $(Z,\tau)$ is nontrivial, $r=\lambda$, $C_f=0<D_f$ and $\mu_f$ is the zero measure, then we can take two distinct points $(x_1,y_1,z_1)$ and $(x_2,y_2,z_2)$ in $X\times Y\times Z$ with $x_1=x_2$ in~order to obtain the singular matrix
	\begin{gather*}
	\left[G_\lambda(\rho(x_j,x_k),\sigma(y_j,y_k), \tau(z_j,z_k))\right]_{j,k=1}^2 = {\left[\frac{D_f}{g(0)^\lambda}\right]}^2_{j,k=1},
	\end{gather*}
	In either case, $G_r$ cannot belong to $ {\rm SPD}(X\times Y\times Z,\rho,\sigma,\tau)$.
\end{proof}

Theorem~\ref{victor2} achieves a necessary and sufficient condition for the strict positive definiteness of $G_r$ in the case in which $r>\lambda$ and $D_f>0$ in the representation for $f$.

\begin{Theorem} \label{victor2}
	Assume $(X,\rho)$, $(Y, \sigma)$ and $(Z,\tau)$ are metric spaces.
	Let $f$ be a function in $\mathcal{S}_\lambda$, $g$~a~positive-valued function in ${\rm CND}(X,\rho)$ and $h$ a positive-valued function in ${\rm CND}(Y\times Z,\sigma,\tau)$. If
	$D_f>0$ and $r>\lambda$, then the following assertions for $G_r$ in~\eqref{mother1} are equivalent:
	\begin{itemize}\itemsep=0pt
		\item[$(i)$] $G_r$ belongs to ${\rm SPD}(X\times Y\times Z,\rho,\sigma,\tau)$.
		\item[$(ii)$] $g(t)>g(0)$, for $t \in D_X^\rho\setminus\{0\}$, and $h(u,v)>h(0,0)$, for $(u,v) \in D_Y^\sigma\times D_Z^\tau$ with $u+v>0$.
	\end{itemize}
\end{Theorem}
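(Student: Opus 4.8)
The plan is to prove only the implication $(ii)\Rightarrow(i)$, since $(i)\Rightarrow(ii)$ is already furnished by Theorem~\ref{victor1}$(ii)$. First I would recall, from the proof of Theorem~\ref{victor1}, the decomposition of $G_r$ obtained by inserting \eqref{stieltjes} into \eqref{mother1}, namely $G_r=S_1+S_2+S_3$ with
\begin{gather*}
S_1(t,u,v)=\frac{C_f}{h(u,v)^r},\qquad S_2(t,u,v)=\frac{D_f}{g(t)^{\lambda}h(u,v)^{r-\lambda}},\\
S_3(t,u,v)=\frac{1}{h(u,v)^{r-\lambda}}\int_{(0,\infty)}\frac{1}{[g(t)+sh(u,v)]^\lambda}\,{\rm d}\mu_f(s).
\end{gather*}
All three belong to ${\rm PD}(X\times Y\times Z,\rho,\sigma,\tau)$ by Theorem~\ref{victor1}$(i)$. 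Thus, for distinct points and a nonzero coefficient vector, the quadratic form attached to $G_r$ is the sum of those attached to $S_1$, $S_2$, $S_3$; the first and third are nonnegative, so the total is bounded below by the one attached to $S_2$. Since $D_f>0$, it therefore suffices to show that $S_2\in{\rm SPD}(X\times Y\times Z,\rho,\sigma,\tau)$.

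I would treat $S_2$ by an integral representation rather than by the Schur product theorem. Because $g$ and $h$ are positive valued and $r>\lambda$, the elementary Gamma identity underlying \eqref{exp} yields
\begin{gather*}
\frac{1}{g(t)^{\lambda}h(u,v)^{r-\lambda}}=\frac{1}{\Gamma(\lambda)\Gamma(r-\lambda)}\int_0^\infty\!\!\int_0^\infty {\rm e}^{-g(t)w}{\rm e}^{-h(u,v)w'}w^{\lambda-1}(w')^{r-\lambda-1}\,{\rm d}w\,{\rm d}w'.
\end{gather*}
Hence $S_2$ is a superposition, against the positive measure $w^{\lambda-1}(w')^{r-\lambda-1}\,{\rm d}w\,{\rm d}w'$ on $(0,\infty)^2$, of the exponential kernels $(t,u,v)\mapsto{\rm e}^{-g(t)w-h(u,v)w'}$. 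It is exactly here that $r>\lambda$ enters, guaranteeing the integrability of $(w')^{r-\lambda-1}$ near the origin and making the $h$-factor a genuine nonconstant power.

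The core step is to show that each kernel ${\rm e}^{-g(t)w-h(u,v)w'}$, $w,w'>0$, is strictly positive definite on the product, which I would argue exactly as in the converse part of Theorem~\ref{produ}. For distinct points $(x_1,y_1,z_1),\dots,(x_n,y_n,z_n)$, Lemma~2.5 in~\cite{reams} reduces the strict positive definiteness of $\big[{\rm e}^{-g(\rho(x_j,x_k))w-h(\sigma(y_j,y_k),\tau(z_j,z_k))w'}\big]_{j,k=1}^n$ to the inequalities
\begin{gather*}
g(0)w+h(0,0)w'<g(\rho(x_j,x_k))w+h(\sigma(y_j,y_k),\tau(z_j,z_k))w',\qquad j\neq k.
\end{gather*}
Given $(ii)$, distinctness of the points forces, for each $j\neq k$, either $x_j\neq x_k$ (so $g(\rho(x_j,x_k))>g(0)$) or $(y_j,z_j)\neq(y_k,z_k)$ (so $h(\sigma(y_j,y_k),\tau(z_j,z_k))>h(0,0)$); since $w,w'>0$, the displayed inequality holds in every case. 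As the associated quadratic form is then strictly positive for every $(w,w')\in(0,\infty)^2$, integrating it against the positive measure above keeps it strictly positive, so $S_2$ is strictly positive definite.

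The main obstacle is conceptual rather than computational: a product of a function in ${\rm SPD}(X,\rho)$ with one in ${\rm SPD}(Y\times Z,\sigma,\tau)$ need not be strictly positive definite on $X\times Y\times Z$, because distinct points of the product may share an $X$-coordinate or a $(Y,Z)$-coordinate and thereby make the two Hadamard factors individually singular. The exponential route sidesteps this by never splitting the matrix into a Hadamard product; it works instead with the single kernel ${\rm e}^{-g(t)w-h(u,v)w'}$, for which the criterion of~\cite{reams} produces genuine strict positive definiteness out of the pointwise alternative supplied by $(ii)$. The only remaining care points are the convergence of the Gamma integrals, secured by $r>\lambda$, and the elementary passage of strict positivity through the double integral.
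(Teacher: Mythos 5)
Your proposal is correct and follows essentially the same route as the paper: reduce to the middle summand $D_f\,g(t)^{-\lambda}h(u,v)^{-(r-\lambda)}$ (the other two being merely positive definite) and establish its strict positive definiteness via the exponential-kernel criterion of Lemma~2.5 in~\cite{reams}. The only cosmetic difference is that the paper invokes Theorem~\ref{produ} with $f_1(w)=w^{-\lambda}$, $f_2(w)=w^{-(r-\lambda)}$ as a black box, whereas you inline its proof through the explicit Gamma-integral representation; your closing observation about why a naive Hadamard-product argument would fail is exactly the point that Theorem~\ref{produ} is designed to circumvent.
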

\begin{proof}
	In view of Theorem~\ref{victor1}$(ii)$, only the implication $(ii)\Rightarrow(i)$ needs to be proved. Assume $D_f>0$, $r> \lambda$, and also the two assumptions on $g$ and $h$ quoted in $(ii)$.
	Theorem~\ref{produ} coupled with arguments justified in the proof of Theorem~\ref{victor1} reveal that
	\begin{gather*}
	(t,u,v) \in D_X^\rho \times D_Y^\sigma \times D_Z^\tau \mapsto \frac{D_f}{g(t)^{\lambda}h(u,v)^{r-\lambda}}
	\end{gather*}
	belongs to ${\rm SPD}(X\times Y \times Z,\rho,\sigma,\tau)$. As the other two summands appearing in the equation defining $G_r(t,u,v)$ belong to ${\rm PD}(X\times Y \times Z,\rho,\sigma,\tau)$, the result follows. \end{proof}

Next, we provide a necessary and sufficient condition for strict positive definiteness in the case in which $D_f=0$, and $r\geq\lambda$.

\begin{Theorem}\label{victor3}
	Assume $(X,\rho)$, $(Y,\sigma)$ and $(Z,\tau)$ are metric spaces. Let $f$ be a function in $\mathcal{S}_\lambda$, $g$~a~positive-valued function in ${\rm CND}(X,\rho)$ and $h$ a positive-valued function in ${\rm CND}(Y\times Z,\sigma,\tau)$. If
	$(X,\rho)$ is nontrivial, $D_f=0$, and $r\geq\lambda$, then the following assertions for $G_r$
as~\eqref{mother1} are equivalent:
	\begin{itemize}\itemsep=0pt
		\item[$(i)$] $G_r$ belongs to ${\rm SPD}(X\times Y\times Z,\rho,\sigma,\tau)$.
		\item[$(ii)$] $f$ is nonconstant, $g(t)>g(0)$, for $t \in D_X^\rho\setminus\{0\}$, and $h(u,w)>h(0,0)$, for $(u,w) \in D_Y^\sigma \times D_Z^\tau$ with $u+w>0$.
	\end{itemize}
\end{Theorem}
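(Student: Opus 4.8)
The plan is to handle the two implications separately, the first being immediate from earlier results. For $(i)\Rightarrow(ii)$ the two inequalities on $g$ and $h$ are already the necessary conditions delivered by Theorem~\ref{victor1}$(ii)$, so the only thing left to check is that $f$ is nonconstant. Since $(X,\rho)$ is nontrivial and $G_r\in{\rm SPD}(X\times Y\times Z,\rho,\sigma,\tau)$, Theorem~\ref{victor1,5}$(i)$ forces $D_f>0$ or $\mu_f\neq 0$; as $D_f=0$ by hypothesis, this leaves $\mu_f\neq 0$. A function in $\mathcal{S}_\lambda$ with $D_f=0$ and $\mu_f\neq 0$ is strictly decreasing through its integral term, hence nonconstant, and $(ii)$ follows.

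For the converse I would start from the representation recorded in the proof of Theorem~\ref{victor1}, which under $D_f=0$ becomes
\[
G_r(t,u,v)=\frac{C_f}{h(u,v)^r}+\frac{1}{h(u,v)^{r-\lambda}}\int_{(0,\infty)}\frac{{\rm d}\mu_f(s)}{[g(t)+sh(u,v)]^\lambda}.
\]
The first summand lies in ${\rm PD}(X\times Y\times Z,\rho,\sigma,\tau)$ by that same proof, and since the sum of a function in ${\rm PD}$ and one in ${\rm SPD}$ is in ${\rm SPD}$, it suffices to prove that the second summand is strictly positive definite. Invoking identity~\eqref{exp} I would write the integral as
\[
\Phi(t,u,v):=\int_{(0,\infty)}\frac{{\rm d}\mu_f(s)}{[g(t)+sh(u,v)]^\lambda}=\frac{1}{\Gamma(\lambda)}\int_{(0,\infty)}\int_0^\infty{\rm e}^{-wg(t)-swh(u,v)}w^{\lambda-1}\,{\rm d}w\,{\rm d}\mu_f(s).
\]

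The core of the argument is that, for each fixed $w,s>0$, the function $(t,u,v)\mapsto{\rm e}^{-wg(t)-swh(u,v)}$ is exactly a product $f_1(g(t))f_2(h(u,v))$ with $f_1(x)={\rm e}^{-wx}$ and $f_2(x)={\rm e}^{-swx}$ nonconstant completely monotone; hence Theorem~\ref{produ}, together with the hypotheses on $g$ and $h$ in $(ii)$, places it in ${\rm SPD}(X\times Y\times Z,\rho,\sigma,\tau)$. Because $f$ is nonconstant and $D_f=0$, the measure $\mu_f$ is not the zero measure, so $w^{\lambda-1}\,{\rm d}w\,{\rm d}\mu_f(s)$ is a nonzero positive measure on $(0,\infty)\times(0,\infty)$; integrating strictly positive definite functions against a nonzero positive measure again yields a strictly positive definite function, so $\Phi\in{\rm SPD}(X\times Y\times Z,\rho,\sigma,\tau)$. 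When $r=\lambda$ the second summand equals $\Phi$, and the proof is finished.

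The only delicate point will be the case $r>\lambda$, where the prefactor $h(u,v)^{-(r-\lambda)}$ is merely positive definite (it is the completely monotone $x\mapsto x^{-(r-\lambda)}$ composed with the ${\rm CND}$ function $h$), so a priori multiplying $\Phi$ by it could destroy strictness. I would overcome this with the standard strengthening of the Schur product theorem \cite{horn}: the Hadamard product of a positive definite matrix with a positive semidefinite matrix whose diagonal entries are all strictly positive is positive definite. For distinct points the matrix of $\Phi$ is positive definite, while the matrix with entries $h(\sigma(y_j,y_k),\tau(z_j,z_k))^{-(r-\lambda)}$ is positive semidefinite with diagonal entries $h(0,0)^{-(r-\lambda)}>0$; their Hadamard product, namely the second summand, is therefore positive definite. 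Adding back the ${\rm PD}$ first summand yields $G_r\in{\rm SPD}(X\times Y\times Z,\rho,\sigma,\tau)$.
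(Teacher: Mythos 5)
Your proposal is correct and follows essentially the same route as the paper: necessity via Theorem~\ref{victor1}$(ii)$ and Theorem~\ref{victor1,5}$(i)$, and sufficiency by reducing the integral summand through identity~\eqref{exp} to the strict positive definiteness of the exponentials $(t,u,v)\mapsto {\rm e}^{-wg(t)-swh(u,v)}$ handled in Theorem~\ref{produ}, then absorbing the prefactor $h(u,v)^{\lambda-r}$ with the Oppenheim--Schur refinement of the Schur product theorem (which is exactly the ``strengthening'' you cite). The only difference is presentational: you invoke Theorem~\ref{produ} as a product $f_1(g)f_2(h)$ rather than repeating its closing argument, and you make the integration-against-a-nonzero-measure step explicit.
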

\begin{proof}
	If $G_r$ belongs to ${\rm SPD}(X\times Y\times Z,\rho,\sigma,\tau)$, Theorem~\ref{victor1,5}$(i)$ shows that $\mu_f$ is nonzero. In particular, $f$ is nonconstant. On the other hand, Theorem~\ref{victor1}$(ii)$ reveals that the other two conditions in $(ii)$ also hold. Thus, $(i)$ implies $(ii)$. Conversely, if $f$ is nonconstant, the assumption $D_f=0$ implies that the measure $\mu_f$ is nonzero. That being said, $(i)$ will follow if we can prove that
	\begin{gather*}
	(t,u,v) \in D_X^\rho \times D_Y^\sigma \times D_Z^\tau \mapsto \int_{(0,\infty)} \frac{1}{[g(t)+sh(u,v)]^\lambda}\,{\rm d}\mu_f(s)
	\end{gather*}
	belongs to ${\rm SPD}(X\times Y \times Z,\rho,\sigma,\tau)$ under the other two assumptions in $(ii)$. Indeed, since $(t,u,v) \in D_X^\rho \times D_Y^\sigma \times D_Z^\tau \mapsto h(u,v)^{\lambda -r}$ belongs to ${\rm PD}(X\times Y \times Z,\rho,\sigma,\tau)$ and $h(0,0)>0$, the Oppenheim--Schur inequality \cite[p.~509]{horn} will lead to~$(i)$. Since $\mu_f$ is nonzero, it suffices to show that
	\begin{gather*}
	(t,u,v) \in D_X^\rho \times D_Y^\sigma \times D_Z^\tau \mapsto \frac{1}{[g(t)+sh(u,v)]^\lambda}
	\end{gather*}
	belongs to ${\rm SPD}(X\times Y \times Z,\rho,\sigma,\tau)$, for $s>0$. By (\ref{exp}),
	what needs to be proved is that the functions
	\begin{gather*}
	(t,u,v) \in D_X^\rho \times D_Y^\sigma \times D_Z^\tau \mapsto {\rm e}^{{-g(t)w-h(u,v)sw}},\qquad w,s>0,
	\end{gather*}
	belong to ${\rm SPD}(X\times Y \times Z,\rho,\sigma,\tau)$. But that follows by the same argument employed at the end of the proof of Theorem~\ref{produ}.
\end{proof}

The proof of Theorem~\ref{victor3} justifies the following complement of Theorem~\ref{victor2}.

\begin{Theorem} 
	Assume $(X,\rho)$, $(Y,\sigma)$, and $(Z,\tau)$ are metric spaces. Let $f$ be a function in $\mathcal{S}_\lambda$, $g$~a~positive-valued function in ${\rm CND}(X,\rho)$, and $h$ a positive-valued function in ${\rm CND}(Y\times Z,\sigma,\tau)$. If
	$D_f>0$, $r=\lambda$, and $\mu_f$ is nonzero, then the following assertions for $G_r$
as~\eqref{mother1} are equivalent:
	\begin{itemize}\itemsep=0pt
		\item[$(i)$] $G_\lambda$ belongs to ${\rm SPD}(X\times Y\times Z,\rho,\sigma,\tau)$.
		\item[$(ii)$] $g(t)>g(0)$, for $t \in D_X^\rho\setminus\{0\}$, and $h(u,v)>h(0,0)$, for $(u,v) \in D_Y^\sigma\times D_Z^\tau$ with $u+w>0$.
	\end{itemize}
\end{Theorem}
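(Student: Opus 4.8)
The plan is to treat the statement as a direct corollary of the decomposition used throughout Section~\ref{sec4}, the key observation being that the case $r=\lambda$ forces the $D_f$-summand to degenerate. The implication $(i)\Rightarrow(ii)$ is already supplied by Theorem~\ref{victor1}$(ii)$, so only $(ii)\Rightarrow(i)$ requires attention.

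For the converse, I would insert the representation \eqref{stieltjes} into \eqref{mother1} and set $r=\lambda$. Because the exponent $r-\lambda$ collapses to $0$, the three summands become $C_f h(u,v)^{-\lambda}$, $D_f g(t)^{-\lambda}$, and $\int_{(0,\infty)} [g(t)+sh(u,v)]^{-\lambda}\,{\rm d}\mu_f(s)$, with no external power of $h$ multiplying the integral. The first two summands belong to ${\rm PD}(X\times Y\times Z,\rho,\sigma,\tau)$ by the complete-monotonicity reasoning already carried out in the proof of Theorem~\ref{victor1}$(i)$.

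The crux is that the middle summand $D_f g(t)^{-\lambda}$ no longer depends on $(u,v)$, so, unlike in Theorem~\ref{victor2} where the analogous term $D_f g(t)^{-\lambda}h(u,v)^{\lambda-r}$ was genuinely strictly positive definite via Theorem~\ref{produ}, it is positive definite but never strictly so. Hence the strictness cannot be extracted from the $D_f$-term and must come entirely from the integral. This is exactly where the hypothesis that $\mu_f$ is nonzero enters: under the conditions in $(ii)$, the function $(t,u,v)\mapsto \int_{(0,\infty)} [g(t)+sh(u,v)]^{-\lambda}\,{\rm d}\mu_f(s)$ was already shown, in the proof of Theorem~\ref{victor3}, to belong to ${\rm SPD}(X\times Y\times Z,\rho,\sigma,\tau)$; there one reduces, via the identity \eqref{exp}, to the strict positive definiteness of $(t,u,v)\mapsto {\rm e}^{-g(t)w-h(u,v)sw}$, which follows from the argument, based on Lemma~2.5 in~\cite{reams}, that closes the proof of Theorem~\ref{produ}.

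Finally, since the sum of a function in ${\rm PD}$ and a function in ${\rm SPD}$ lies in ${\rm SPD}$, adding the two positive definite summands to the strictly positive definite integral term yields $G_\lambda\in{\rm SPD}(X\times Y\times Z,\rho,\sigma,\tau)$, establishing $(ii)\Rightarrow(i)$. No genuinely new obstacle arises; the only subtlety, already flagged, is recognizing that at $r=\lambda$ the role played by the $D_f$-term in Theorem~\ref{victor2} is transferred to the integral term, which is precisely why $\mu_f\neq 0$ must be assumed.
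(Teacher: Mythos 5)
Your proposal is correct and follows essentially the same route as the paper, which states this result without a separate proof precisely because, as you observe, the strict positive definiteness of the integral term $\int_{(0,\infty)}[g(t)+sh(u,v)]^{-\lambda}\,{\rm d}\mu_f(s)$ under the conditions in $(ii)$ and $\mu_f\neq 0$ is already established inside the proof of Theorem~\ref{victor3}, while the remaining summands are positive definite by Theorem~\ref{victor1}$(i)$. Your added remark that at $r=\lambda$ the term $D_fg(t)^{-\lambda}$ degenerates (being independent of $(u,v)$) and so the strictness must come from the measure term is exactly the reason the hypothesis $\mu_f\neq 0$ appears, and matches the paper's discussion of the remaining open case $C_fD_f>0$, $\mu_f=0$.
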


It remains to consider the case in which $D_f>0$, $r=\lambda$ and $\mu_f=0$. However, Theorem~\ref{victor1,5}$(ii)$ shows that, essentially, what needs to be analyzed is the case where $C_fD_f>0$, $r=\lambda$ and~$\mu_f=0$ and also imposing the non-triviality of some of the spaces involved. In this case $G_r$ takes the form
\begin{gather*}
G_\lambda(t,u,v) = \frac{C_f}{h(u,v)^\lambda}+\frac{D_f}{g(t)^{\lambda}},\qquad (t ,u,v) \in D_X^\rho\times D_Y^\sigma\times D_Z^\tau,
\end{gather*}
with $C_fD_f>0$. So far, the strict positive definiteness of $G_r$ in this case remains an open question.

\begin{Remark}\label{rema} All the theorems proved so far can be re-stated and demonstrated for the model
	\begin{gather*}
	H_r(t,u,v)=\frac{1}{g(t)^r}f\left(\frac{h(u,v)}{g(t)} \right),\qquad (t,u,v)\in D_X^{\rho}\times D_{Y}^{\sigma}\times D_{Z}^{\tau},\qquad f\in S_\lambda,
	\end{gather*}
	with $r$, $g$ and $h$ as before. The obvious adjustments and the details on that will be left to the reader.
\end{Remark}

\section{Models based on generalized complete Bernstein functions}\label{sec5}

In this section, we will point how to extend the results proved in Section~\ref{sec4} to models defined by functions coming from the class $\mathcal{B}_\lambda$, here called the class of {\it generalized complete Bernstein functions of order $\lambda >0$}, that is, functions $f$ having a representation in the form
\begin{gather*}
f(w)=A_f+{B_f}{w^\lambda}+\int_{(0,\infty)} {\left(\frac{w}{w+s} \right)}^\lambda \,{\rm d}\nu_f(s),\qquad x>0,
\end{gather*}
where $A_f,B_f\geq 0$ and $\nu_f$ is a positive measure on $(0,\infty)$ for which
\begin{gather*}
\int_{(0,\infty)} \frac{1}{(1+s)^\lambda}\,{\rm d}\nu_f(s)<\infty.
\end{gather*}
The class $\mathcal{B}_1$ is more common in the literature. Functions in it may receive different names depending where they are used: operator monotone functions, L\"{o}wner (Loewner) functions, Pick functions, Nevanlinna functions, etc. Many examples of functions in $\mathcal{B}_\lambda$ can be found scattered in \cite{schilling}.

As we shall see below, the proofs of the results to be enunciated in this section are very similar to those of the theorems proved in Section~\ref{sec3}. For that reason, most of the details will be omitted.

We begin with a version of Theorem~\ref{victor1} for models generated by functions in $\mathcal{B}_\lambda$.

\begin{Theorem} Let $f$ be a function in $\mathcal{B}_\lambda$, $g$~a~positive-valued function in ${\rm CND}(X,\rho)$, and $h$~a~positive-valued function in ${\rm CND}(Y\times Z,\sigma, \tau)$. For $r\geq \lambda$, set
	\begin{gather}\label{bern}
	I_r(t,u,v)=\frac{1}{g(t)^r}f\left(\frac{g(t)}{h(u,v)} \right),\qquad (t,u,v)\in D_X^{\rho}\times D_{Y}^{\sigma}\times D_{Z}^{\tau}.
	\end{gather}
	The following assertions hold:
	\begin{enumerate}\itemsep=0pt
		\item[$(i)$] $I_r$ belongs to ${\rm PD}(X\times Y\times Z,\rho,\sigma,\tau)$.
		\item[$(ii)$] If $I_r$ belongs to ${\rm SPD}(X\times Y, \rho,\sigma)$, then $g(t)>g(0)$, for $t \in D_X^\rho\setminus\{0\}$, and $h(u,v)>h(0,0)$, for $(u,v) \in D_Y^\sigma\times D_Z^\tau$ with $u+v>0$.
	\end{enumerate}
\end{Theorem}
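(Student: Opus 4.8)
The plan is to mimic the structure of the proof of Theorem~\ref{victor1}, substituting the integral representation of a generalized complete Bernstein function for that of a generalized Stieltjes function. First I would insert the representation
\begin{gather*}
f(w)=A_f+B_f w^\lambda+\int_{(0,\infty)}\left(\frac{w}{w+s}\right)^\lambda\,{\rm d}\nu_f(s)
\end{gather*}
with $w=g(t)/h(u,v)$ into \eqref{bern}, obtaining
\begin{gather*}
I_r(t,u,v)=\frac{A_f}{g(t)^r}+\frac{B_f}{g(t)^{r-\lambda}h(u,v)^\lambda}
+\frac{1}{g(t)^{r-\lambda}}\int_{(0,\infty)}\frac{1}{[g(t)+sh(u,v)]^\lambda}\,h(u,v)^{?}\,{\rm d}\nu_f(s),
\end{gather*}
and I would carefully track the powers so that each summand is seen to be a product of completely monotone compositions. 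The key observation, exactly as in Theorem~\ref{victor1}, is that $w\mapsto w^{-\alpha}$ is completely monotone for $\alpha>0$, so $t\mapsto g(t)^{-\alpha}$ lies in ${\rm PD}(X,\rho)$ and $(u,v)\mapsto h(u,v)^{-\beta}$ lies in ${\rm PD}(Y\times Z,\sigma,\tau)$, and these extend trivially to functions on the triple product.

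For part $(i)$ I would then argue that each of the three summands belongs to ${\rm PD}(X\times Y\times Z,\rho,\sigma,\tau)$. The first summand $A_f g(t)^{-r}$ and the second $B_f g(t)^{-(r-\lambda)}h(u,v)^{-\lambda}$ are handled directly by the complete monotonicity of reciprocal powers together with the closure of ${\rm PD}$ under products. For the integral term, the elementary identity \eqref{exp} reduces $[g(t)+sh(u,v)]^{-\lambda}$ to an integral over $w$ of ${\rm e}^{-wg(t)}{\rm e}^{-swh(u,v)}$, each factor of which is positive definite since $w\mapsto {\rm e}^{-w}$ is completely monotone and $g$, $h$ are conditionally negative definite; integrating against $w^{\lambda-1}\,{\rm d}w$ and then against ${\rm d}\nu_f(s)$ preserves positive definiteness, and multiplying by the positive definite factor $g(t)^{-(r-\lambda)}$ keeps it in the class. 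The only delicate bookkeeping here is reconciling the power of $h(u,v)$ in the rewritten integrand with the prefactor $g(t)^{-(r-\lambda)}$; I would verify that the net exponents match the representation so that no spurious factor appears.

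For part $(ii)$, the argument is purely the singular-matrix obstruction used in Theorem~\ref{victor1}$(ii)$. If $g(t)=g(0)$ for some $t\in D_X^\rho\setminus\{0\}$, I would choose two distinct points of $X\times Y\times Z$ agreeing in their $Y$ and $Z$ coordinates but differing in $X$ with $\rho(x_1,x_2)=t$, producing a constant $2\times 2$ matrix $[I_r(0,0,0)]_{j,k=1}^2$ that is singular; symmetrically, if $h(u,v)=h(0,0)$ for some $(u,v)$ with $u+v>0$, I would choose points agreeing in $X$ but realizing $\sigma(y_1,y_2)=u$, $\tau(z_1,z_2)=v$, again yielding the same singular matrix. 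Either contradicts strict positive definiteness, establishing the contrapositive.

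The main obstacle is the power-counting in the integral summand: because the Bernstein kernel $(w/(w+s))^\lambda$ carries a numerator $w^\lambda=g(t)^\lambda/h(u,v)^\lambda$, the resulting factors of $h(u,v)$ do not align as cleanly as in the Stieltjes case, and one must confirm that after simplification every exponent on $g$ and $h$ is nonnegative (so that reciprocal-power complete monotonicity applies) and that the remaining $g(t)^{-(r-\lambda)}$ prefactor, rather than a mixed factor, multiplies a genuinely positive definite integral. Once the exponents are verified, the positive definiteness of each piece follows from the same completely-monotone-composition machinery already invoked in Theorem~\ref{victor1}, so I expect no new analytic difficulty beyond this algebraic check.
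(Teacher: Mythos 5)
Your proposal follows the paper's proof exactly: insert the $\mathcal{B}_\lambda$ representation, split $I_r$ into three summands, and run the argument of Theorem~\ref{victor1} (reciprocal-power complete monotonicity, the identity \eqref{exp}, closure of the positive definite class under products) for part $(i)$, together with the singular $2\times 2$ matrix obstruction for part $(ii)$. The power-counting you flag as the main obstacle resolves trivially: with $w=g(t)/h(u,v)$ one has $w/(w+s)=g(t)/[g(t)+sh(u,v)]$, so the $h$-factors cancel completely and the third summand is $g(t)^{-(r-\lambda)}\int_{(0,\infty)}[g(t)+sh(u,v)]^{-\lambda}\,{\rm d}\nu_f(s)$, which is exactly the formula the paper's proof uses.
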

\begin{proof}
	It suffices to use the formula
	\begin{gather*}
	I_r(t,u,v)=\frac{A_f}{g(t)^r}+\frac{B_f}{h(u,v)^\lambda g(t)^{r-\lambda}}+\frac{1}{g(t)^{r-\lambda}}\int_{(0,\infty)} \frac{1}{{\left[ g(t)+sh(u,v)\right]}^\lambda} \,{\rm d}\nu_f(s)
	\end{gather*}
	that derives from the integral representation for $f$ and to mimic the proof of Theorem~\ref{victor1}.
\end{proof}

Theorem~\ref{victor1,5} takes the following form.

\begin{Theorem}
	Let $f$ be a function in $\mathcal{B}_\lambda$, $g$~a~positive-valued function in ${\rm CND}(X,\rho)$, and $h$~a~positive-valued function in ${\rm CND}(Y\times Z,\sigma, \tau)$. The following assertion holds for the func\-tion~$I_r$ in \eqref{bern}:
	\begin{itemize}\itemsep=0pt
		\item[$(i)$] If either $(Y,\sigma)$ or $(Z,\tau)$ is nontrivial and $I_r$ belongs to ${\rm SPD}(X\times Y\times Z,\rho,\sigma,\tau)$, then either $B_f>0$ or $\nu_f$ is not the zero measure.
	\end{itemize}
	In the case in which $r=\lambda$ and $D_f>0$, the following additional assumption holds:
	\begin{itemize}\itemsep=0pt
		\item[$(ii)$] If $(X,\rho)$ is nontrivial and $I_\lambda$ belongs to ${\rm SPD}(X\times Y\times Z,\rho,\sigma,\tau)$, then either $A_f>0$ or $\nu_f$ is not the zero measure.
	\end{itemize}
\end{Theorem}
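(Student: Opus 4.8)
The plan is to mimic the proof of Theorem~\ref{victor1,5}, relying on the decomposition of $I_r$ from~\eqref{bern} into the three summands
\[
\frac{A_f}{g(t)^r},\qquad \frac{B_f}{h(u,v)^\lambda\,g(t)^{r-\lambda}},\qquad \frac{1}{g(t)^{r-\lambda}}\int_{(0,\infty)}\frac{1}{[g(t)+sh(u,v)]^\lambda}\,{\rm d}\nu_f(s),
\]
obtained by inserting the $\mathcal{B}_\lambda$-representation of $f$, together with the elementary observation that a $2\times2$ matrix with all entries equal is singular. Thus, exhibiting two distinct points of $X\times Y\times Z$ at which $I_r$ takes a single constant value will preclude strict positive definiteness, and in each part I would argue by contraposition.

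For $(i)$, I would suppose $B_f=0$ and $\nu_f$ is the zero measure. Then the last two summands vanish and $I_r$ reduces to $(t,u,v)\mapsto A_f/g(t)^r$, a function of $t$ alone. Using the nontriviality of $(Y,\sigma)$ or of $(Z,\tau)$, I would choose two distinct points $(x_1,y_1,z_1)$ and $(x_2,y_2,z_2)$ with $x_1=x_2$ but differing in the second or third coordinate, so that $\rho(x_1,x_2)=0$ and $g$ is evaluated at $g(0)$ throughout. The matrix $[I_r(\rho(x_j,x_k),\sigma(y_j,y_k),\tau(z_j,z_k))]_{j,k=1}^2$ then equals the constant, hence singular, matrix $[A_f/g(0)^r]_{j,k=1}^2$, contradicting $I_r\in{\rm SPD}(X\times Y\times Z,\rho,\sigma,\tau)$.

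For $(ii)$, I would set $r=\lambda$ and suppose, toward a contradiction, that $A_f=0$ and $\nu_f$ is the zero measure (with $B_f>0$). Since $g(t)^{r-\lambda}=1$ when $r=\lambda$, the function $I_\lambda$ collapses to $(t,u,v)\mapsto B_f/h(u,v)^\lambda$, depending only on $(u,v)$. Invoking the nontriviality of $(X,\rho)$, I would select two distinct points with $y_1=y_2$ and $z_1=z_2$ but $x_1\neq x_2$, forcing $\sigma(y_1,y_2)=\tau(z_1,z_2)=0$ and hence $h$ to be evaluated at $h(0,0)$ throughout. The associated $2\times2$ matrix equals the singular constant matrix $[B_f/h(0,0)^\lambda]_{j,k=1}^2$, again contradicting strict positive definiteness.

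I expect no serious obstacle here: both parts come down to the same singular-matrix construction already used for Theorem~\ref{victor1,5}, and the only real care needed is the bookkeeping of which summand survives when the indicated parameters vanish and, correspondingly, which space must be nontrivial in order to manufacture two distinct points on which $I_r$ is constant. One small point worth flagging is that the extra hypothesis in $(ii)$ should read $B_f>0$ rather than $D_f>0$, since $B_f$ is the coefficient in the $\mathcal{B}_\lambda$-representation that plays the role of $D_f$ from the Stieltjes setting.
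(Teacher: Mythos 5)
Your proof is correct and is exactly the argument the paper intends: it transplants the singular constant $2\times 2$ matrix construction from the proof of Theorem~\ref{victor1,5} to the decomposition of $I_r$, with the roles of the spaces swapped to match which summand survives. You are also right that the hypothesis in $(ii)$ should read $B_f>0$ rather than $D_f>0$.
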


As for the strict positive definiteness of the models being considered in this section, the following three results settle an if and only if condition.

\begin{Theorem}
	Assume $(X,\rho)$, $(Y,\sigma)$, and $(Z,\tau)$ are metric spaces. Let $f$ be a function in $\mathcal{B}_\lambda$,
	$g$~a~positive-valued function in ${\rm CND}(X,\rho)$, and $h$ a positive-valued function in ${\rm CND}(Y\times Z,\sigma, \tau)$.
	If $B_f>0$ and $r>\lambda$, then the following assertions for $I_r$ in~\eqref{bern} are equivalent:
	\begin{itemize}\itemsep=0pt
		\item[$(i)$] $I_r$ belongs to ${\rm SPD}(X\times Y\times Z,\rho,\sigma,\tau)$.
		\item[$(ii)$] $g(t)>g(0)$, for $t \in D_X^\rho\setminus\{0\}$, and $h(u,v)>h(0,0)$, for $(u,v) \in D_Y^\sigma\times D_Z^\tau$ with $u+v>0$.
	\end{itemize}
\end{Theorem}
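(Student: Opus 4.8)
The plan is to mirror the proof of Theorem~\ref{victor2}, replacing $\mathcal{S}_\lambda$ by $\mathcal{B}_\lambda$ and $G_r$ by $I_r$. The implication $(i)\Rightarrow(ii)$ requires no new argument: it is precisely assertion $(ii)$ of the $\mathcal{B}_\lambda$-analogue of Theorem~\ref{victor1} already recorded above in this section. All the work therefore concerns the converse $(ii)\Rightarrow(i)$.

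For that converse I would start from the decomposition of $I_r$ furnished by the integral representation of $f\in\mathcal{B}_\lambda$,
\begin{gather*}
I_r(t,u,v)=\frac{A_f}{g(t)^r}+\frac{B_f}{h(u,v)^\lambda g(t)^{r-\lambda}}+\frac{1}{g(t)^{r-\lambda}}\int_{(0,\infty)} \frac{1}{{\left[ g(t)+sh(u,v)\right]}^\lambda} \,{\rm d}\nu_f(s).
\end{gather*}
By the proof of assertion $(i)$ of the $\mathcal{B}_\lambda$-analogue of Theorem~\ref{victor1}, the first and third summands belong to ${\rm PD}(X\times Y\times Z,\rho,\sigma,\tau)$ under nothing more than the standing positivity of $g$ and $h$. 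Since the sum of a strictly positive definite function with positive definite ones is again strictly positive definite, it suffices to prove that the middle summand is strictly positive definite.

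The middle summand $(t,u,v)\mapsto B_f\, g(t)^{-(r-\lambda)}\,h(u,v)^{-\lambda}$ is exactly a function of the product form~\eqref{efona}: take $f_1(s)=s^{-(r-\lambda)}$ and $f_2(s)=s^{-\lambda}$, both of which are \emph{nonconstant} completely monotone functions because $r>\lambda$ and $\lambda>0$, together with the positive-valued $g\in{\rm CND}(X,\rho)$ and $h\in{\rm CND}(Y\times Z,\sigma,\tau)$ in the roles prescribed there. Invoking Theorem~\ref{produ} under the hypotheses in $(ii)$---that $g(t)>g(0)$ for $t\in D_X^\rho\setminus\{0\}$ and $h(u,v)>h(0,0)$ for $(u,v)\in D_Y^\sigma\times D_Z^\tau$ with $u+v>0$---then places this term in ${\rm SPD}(X\times Y\times Z,\rho,\sigma,\tau)$, which finishes $(ii)\Rightarrow(i)$. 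No step poses a genuine obstacle; the only point deserving care is confirming that Theorem~\ref{produ} applies verbatim, namely that $B_f>0$ activates the middle term and that the strict inequalities $r-\lambda>0$ and $\lambda>0$ make $f_1$ and $f_2$ nonconstant. This is the precise counterpart of Theorem~\ref{victor2}, with the roles of the $g$- and $h$-exponents interchanged.
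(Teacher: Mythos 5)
Your proposal is correct and follows essentially the route the paper intends: the paper omits the proof in Section~\ref{sec5} precisely because it is obtained by mimicking Theorem~\ref{victor2}, i.e., reducing $(i)\Rightarrow(ii)$ to part $(ii)$ of the $\mathcal{B}_\lambda$-analogue of Theorem~\ref{victor1} and then, for the converse, isolating the middle summand $B_f\,g(t)^{-(r-\lambda)}h(u,v)^{-\lambda}$ of the decomposition and applying Theorem~\ref{produ} to it while the other two summands contribute only positive definiteness. Your observation that the $g$- and $h$-exponents are interchanged relative to Theorem~\ref{victor2}, and that $r>\lambda$ and $\lambda>0$ are exactly what make $f_1$ and $f_2$ nonconstant, is the correct point of care.
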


\begin{Theorem}
	Assume $(X,\rho)$, $(Y,\sigma)$, and $(Z,\tau)$ are metric spaces. Let $f$ be a function in $\mathcal{B}_\lambda$,
	$g$~a~positive-valued function in ${\rm CND}(X,\rho)$, and $h$ a positive-valued function in ${\rm CND}(Y\times Z,\sigma, \tau)$.
	If either $(Y,\sigma)$ or $(Z,\tau)$ is nontrivial, $B_f=0$, and $r\geq\lambda$, then the following assertions for $I_r$ in~\eqref{bern} are equivalent:
	\begin{itemize}\itemsep=0pt
		\item[$(i)$] $I_r$ belongs to ${\rm SPD}(X\times Y\times Z,\rho,\sigma,\tau)$.
		\item[$(ii)$] $f$ is nonconstant, $g(t)>g(0)$, for $t \in D_X^\rho\setminus\{0\}$, and $h(u,v)>h(0,0)$, for $(u,v) \in D_Y^\sigma \times D_Z^\tau$, $u+v>0$.
	\end{itemize}
\end{Theorem}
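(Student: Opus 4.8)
The plan is to follow the proof of Theorem~\ref{victor3} almost verbatim, interchanging the roles of the $X$-variable and the $(Y,Z)$-variables to reflect that in $I_r$ the normalizing power $g(t)^{-r}$ now sits on $t$ rather than on $(u,v)$. Putting $B_f=0$ in the integral representation for $I_r$ displayed just above, one has
\begin{gather*}
I_r(t,u,v)=\frac{A_f}{g(t)^r}+\frac{1}{g(t)^{r-\lambda}}\int_{(0,\infty)} \frac{1}{[g(t)+sh(u,v)]^\lambda}\,{\rm d}\nu_f(s),
\end{gather*}
so everything reduces to understanding the integral summand.

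For $(i)\Rightarrow(ii)$ I would only collect necessary conditions already available. Since $B_f=0$ and one of $(Y,\sigma)$, $(Z,\tau)$ is nontrivial, the $\mathcal{B}_\lambda$-analogue of Theorem~\ref{victor1,5}$(i)$ forces $\nu_f$ to be nonzero, whence $f$ is nonconstant; the $\mathcal{B}_\lambda$-analogue of Theorem~\ref{victor1}$(ii)$ then yields the two strict inequalities for $g$ and $h$.

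The content is in $(ii)\Rightarrow(i)$. Because $B_f=0$, nonconstancy of $f$ is exactly the statement that $\nu_f\neq 0$, so it suffices to prove the integral summand is strictly positive definite. First I would peel off the factor $g(t)^{\lambda-r}$: since $r-\lambda\geq 0$, the map $(t,u,v)\mapsto g(t)^{\lambda-r}$ is a completely monotone function composed with a function in ${\rm CND}$ and so lies in ${\rm PD}(X\times Y\times Z,\rho,\sigma,\tau)$, with positive diagonal values $g(0)^{\lambda-r}>0$. Hence the Oppenheim--Schur inequality \cite[p.~509]{horn} reduces the task to showing that
\begin{gather*}
(t,u,v)\mapsto \int_{(0,\infty)} \frac{1}{[g(t)+sh(u,v)]^\lambda}\,{\rm d}\nu_f(s)
\end{gather*}
is strictly positive definite. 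As $\nu_f\neq 0$, this in turn follows once each kernel $(t,u,v)\mapsto [g(t)+sh(u,v)]^{-\lambda}$, $s>0$, is strictly positive definite, and the elementary identity \eqref{exp} recasts that as the strict positive definiteness of the exponentials $(t,u,v)\mapsto {\rm e}^{-g(t)w-sh(u,v)w}$, $w,s>0$. Finally, adding back the term $A_f/g(t)^r\in {\rm PD}$ preserves strict positive definiteness and gives $(i)$.

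The hard part will be the strict positive definiteness of those exponential kernels, which I expect to settle exactly as at the end of the proof of Theorem~\ref{produ}: for distinct points $(x_j,y_j,z_j)$ in $X\times Y\times Z$, Lemma~2.5 in \cite{reams} reduces positive definiteness of the relevant matrix to the strict inequalities
\begin{gather*}
g(0)w+h(0,0)sw< g(\rho(x_j,x_k))w+h(\sigma(y_j,y_k),\tau(z_j,z_k))sw,\qquad j\neq k,
\end{gather*}
and these hold because distinct points differ in at least one coordinate, so that the metric-space hypothesis activates either $g(\rho(x_j,x_k))>g(0)$ when $x_j\neq x_k$, or $h(\sigma(y_j,y_k),\tau(z_j,z_k))>h(0,0)$ when $y_j\neq y_k$ or $z_j\neq z_k$.
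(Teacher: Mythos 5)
Your proposal is correct and is essentially the proof the paper intends: the paper omits the argument for this theorem, stating only that it mimics the Section~\ref{sec4} proofs, and your adaptation of the proof of Theorem~\ref{victor3} -- swapping the roles of $g$ and $h$, using the $\mathcal{B}_\lambda$-analogues of Theorems~\ref{victor1}$(ii)$ and~\ref{victor1,5}$(i)$ for necessity, and for sufficiency peeling off $g(t)^{\lambda-r}$ via the Oppenheim--Schur inequality before reducing to exponential kernels handled by Lemma~2.5 of \cite{reams} -- is exactly that intended adaptation. No gaps.
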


\begin{Theorem}
	Assume $(X,\rho)$, $(Y,\sigma)$, and $(Z,\tau)$ are metric spaces. Let $f$ be a function in $\mathcal{B}_\lambda$,
	$g$~a~positive-valued function in ${\rm CND}(X,\rho)$, and $h$ a positive-valued function in ${\rm CND}(Y\times Z,\sigma, \tau)$. If
	$B_f>0$, $r=\lambda$, and $\nu_f$ is nonzero, then the following assertions for $I_r$ in~\eqref{bern} are equivalent:
	\begin{itemize}\itemsep=0pt
		\item[$(i)$] $I_\lambda$ belongs to ${\rm SPD}(X\times Y\times Z,\rho,\sigma,\tau)$.
		\item[$(ii)$] $g(t)>g(0)$, for $t \in D_X^\rho\setminus\{0\}$, and $h(u,v)>h(0,0)$, for $(u,v) \in D_Y^\sigma\times D_Z^\tau$ with $u+v>0$.
	\end{itemize}
\end{Theorem}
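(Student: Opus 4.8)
The plan is to establish the two implications separately, following the pattern of Theorem~\ref{victor3} transcribed to the class $\mathcal{B}_\lambda$, and to use the hypothesis that $\nu_f$ is nonzero (rather than $B_f>0$) as the genuine source of strictness.

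The implication $(i)\Rightarrow(ii)$ I would obtain at once from part $(ii)$ of the version of Theorem~\ref{victor1} for functions in $\mathcal{B}_\lambda$ proved at the start of this section: it states precisely that if $I_r\in{\rm SPD}(X\times Y\times Z,\rho,\sigma,\tau)$, then $g(t)>g(0)$ on $D_X^\rho\setminus\{0\}$ and $h(u,v)>h(0,0)$ on the stated subset of $D_Y^\sigma\times D_Z^\tau$. None of the extra hypotheses $B_f>0$, $r=\lambda$, $\nu_f\neq 0$ enter here.

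For $(ii)\Rightarrow(i)$ I would begin from the decomposition obtained by putting $r=\lambda$ in the integral representation of $I_r$ recorded earlier in this section, namely
\begin{gather*}
I_\lambda(t,u,v)=\frac{A_f}{g(t)^\lambda}+\frac{B_f}{h(u,v)^\lambda}+\int_{(0,\infty)}\frac{1}{[g(t)+sh(u,v)]^\lambda}\,{\rm d}\nu_f(s).
\end{gather*}
The first two summands each arise as the composition of a completely monotone function with a conditionally negative definite function and therefore lie in ${\rm PD}(X\times Y\times Z,\rho,\sigma,\tau)$, exactly as already argued for the $\mathcal{B}_\lambda$-analogue of Theorem~\ref{victor1}. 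Since adding a positive definite function to a strictly positive definite one preserves strict positive definiteness, it suffices to show that the third summand belongs to ${\rm SPD}(X\times Y\times Z,\rho,\sigma,\tau)$. For each fixed $s>0$, identity~\eqref{exp} expresses $(t,u,v)\mapsto[g(t)+sh(u,v)]^{-\lambda}$ as a constant multiple of the integral over $w\in(0,\infty)$ of $(t,u,v)\mapsto{\rm e}^{-g(t)w-h(u,v)sw}$ against $w^{\lambda-1}\,{\rm d}w$; under the assumptions in $(ii)$ these exponentials are strictly positive definite for all $w,s>0$, just as established at the end of the proof of Theorem~\ref{produ}, so integration in $w$ yields that $(t,u,v)\mapsto[g(t)+sh(u,v)]^{-\lambda}$ is strictly positive definite for every $s>0$. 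Because $\nu_f$ is a nonzero positive measure, integrating these functions against $\nu_f$ leaves the relevant quadratic forms strictly positive, and $(i)$ follows.

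The step I expect to be the crux is the very last one: verifying that strictness is not lost when one integrates against $\nu_f$. This is exactly where $\nu_f\neq 0$ is indispensable, because with $r=\lambda$ the middle term collapses to $B_f/h(u,v)^\lambda$, which depends only on $(u,v)$ and is merely positive definite, so it cannot supply strictness on its own. To close the gap I would note that, for fixed distinct points and not-all-zero coefficients, the quadratic form built from $[g(t)+sh(u,v)]^{-\lambda}$ is a strictly positive, continuous function of $s$, whence its integral against the nonzero positive measure $\nu_f$ is strictly positive.
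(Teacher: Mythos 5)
Your proof is correct and takes essentially the same route the paper intends: the theorem's proof is omitted in Section~\ref{sec5} by reference to the Section~\ref{sec4} analogues, and the relevant one (the complement of Theorem~\ref{victor2}, justified by the proof of Theorem~\ref{victor3}) uses exactly your decomposition, the identity~\eqref{exp}, and the strict positive definiteness of the exponentials established at the end of the proof of Theorem~\ref{produ}. Your explicit verification that integrating the strictly positive quadratic forms against the nonzero positive measure $\nu_f$ preserves strictness merely fills in a step the paper leaves implicit.
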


\begin{Remark} All the theorems proved so far in this section can be re-stated and proved for the model
	\begin{gather*}	
	J_r(t,u,v) = \displaystyle \frac{1}{h(u,v)^r}f\left(\frac{h(u,v)}{g(t)} \right),\qquad \displaystyle (t,u,v)\in D_X^{\rho}\times D_{Y}^{\sigma}\times D_{Z}^{\tau}, \qquad f\in \mathcal{B}_\lambda,
	\end{gather*}
	with $r$, $g$ and $h$ as before and with some small adjustments. Once again, we leave the proofs to the interested reader.
\end{Remark}

\section{Two concrete realizations}\label{sec6}

This section contains some illustrations of the theorems proved in Section~\ref{sec4}. All of them can be adapted in order to become applications of the theorems presented in Section~\ref{sec5}, but that will be left to the reader.

\begin{Example}
	Let $X$ be the unit sphere $S^d$ in $\mathbb{R}^{d+1}$ endowed with is usual geodesic distance~$\rho_d$ and let $Y=[0,\pi/2]$ and $Z=\mathbb{R}^n$
	both endowed with their usual Euclidean distances $\sigma$ and~$\tau$ respectively. The function $g$ given by the formula
	\begin{gather*}
	g(t)=3-\cos t, \qquad t\in [0,\pi],
	\end{gather*}
	belongs to ${\rm CND}\big(S^d,\rho_d\big)$ while results proved in \cite{kapil} show that, if $s\in (0,2]$, then the function $h$ given by
	\begin{gather*}
	h(u,v)=1+\sin u +v^s, \qquad (u,v) \in [0,\pi/2] \times [0,\infty),
	\end{gather*}
	belongs to ${\rm CND}(Y\times Z,\sigma,\tau)$. It is also easily seen that
	$g(t)>g(0)$ for all $t\in (0,\pi]$ and $h(u,v)>h(0,0)$ for $(u,v)\in [0,\pi/2] \times [0,\infty)$ with $u+v>0$. Under the setting of either Theorem~\ref{victor2} or Theorem~\ref{victor3}, the model 	
	\begin{gather*}
	G_r(t,u,v)=\frac{1}{{\left[1+ \sin u+v^s \right]}^r} f\left(\frac{3-\cos t }{1+\sin u +v^s}\right), \qquad (t,u,v)\in [0,\pi]\times [0,\pi/2]\times [0,\infty),
	\end{gather*}	
	defines a function $G_r$ in ${\rm SPD}(X, Y, Z, \rho_d,\sigma,\tau)$, whenever $f$ comes from $S_\lambda$. A similar conclusion holds for the model
	\begin{gather*}
	H_r(t,u,v)=\frac{1}{{\left[3-\cos t\right]}^r} f\left(\frac{1+\sin u +v^s }{3-\cos t}\right), \qquad (t,u,v)\in [0,\pi]\times [0,\pi/2]\times [0,\infty),
	\end{gather*}
	under the setting in Remark~\ref{rema}. These examples can be expanded, by letting $Z$ be a Hilbert space and $\tau$ the distance induced by its norm, keeping all the rest the same. In fact, we can let $(Z,\tau)$ be a quasi-metric space which is isometrically embedded in an infinite-dimensional Hilbert space.
\end{Example}

\begin{Example}
	Here we consider $X=\mathbb{R}$ endowed with its Euclidean norm $\rho$. On the other hand, we let $Y=S^d$ and $Z=S^{d'}$, both endowed with their geodesic distances $\sigma_d$ and $\tau_{d'}$. Since $t\in [0,\pi] \mapsto t$ belongs to both ${\rm CND}(Y,\sigma_d)$ and ${\rm CND}(Z,\tau_{d'})$, then the mapping $h\colon [0,\pi]^2 \to \mathbb{R}$ given by $h(u,v)=c+u+v$ defines a positive-valued function that belongs to ${\rm CND}(Y\times Z,\sigma_d,\tau_{d'})$, whenever $c$ is a positive constant. In addition,
	$h(u,v)>c=h(0,0)$, whenever $u+v>0$. On the other hand, $g\colon [0,\infty) \to \mathbb{R}$ given by $g(t)=t^s$, $t\geq 0$, belongs to ${\rm CND}(X,\rho)$, as long as $s\in (0,2]$. Hence, $c+g$ is a positive-valued function that belongs to ${\rm CND}(X, \rho)$ for which $g(t)>c=g(0)$ for $t>0$. With this in mind, it is now clear that under the setting of either Theorem~\ref{victor2} or~Theorem~\ref{victor3}, the model 	
	\begin{gather*}
	G_r(t,u,v)=\frac{1}{{\left[c+u+v \right]}^r} f\left(\frac{c+t^s}{c+u+v}\right), \qquad (t,u,v)\in [0,\infty) \times [0,\pi]\times [0,\pi],
	\end{gather*}	
	defines a function $G_r$ in ${\rm SPD}(X, Y, Z, \rho,\sigma_d,\tau_{d'})$, as long as $f$ comes from $S_\lambda$. The interested reader can implement considerably more complicated examples along the same lines by using the characterization of functions in ${\rm CND}\big(S^d, \sigma_d\big)$ obtained in~\cite{mene} and the many concrete examples of functions in ${\rm CND}(\mathbb{R},\rho)$ listed in~\cite{kapil}.
\end{Example}

The examples point that for the right choice of the quasi-metric spaces, the models discussed in the paper may lead to flexible, interpretable and even computationally feasible classes of~cross-covariance functions for multivariate random fields adopted in statistics. Hopefully, that will be confirmed in the near future.

\subsection*{Acknowledgements}

The authors express their gratitude to the anonymous referees for their comments and remarks which led to an improved version of the paper.

\pdfbookmark[1]{References}{ref}
\LastPageEnding

\end{document}